\newcommand{\bb}{\ensuremath{\mathbb B}}
\newcommand{\G}{\ensuremath{\mathscr G}}
\newcommand{\bp}[1]{\ensuremath{\mathbb P}_\mu \left( #1 \right)}
\newcommand{\conv}[1]{\ensuremath{\stackrel{#1}{\longrightarrow}} }
\renewcommand{\leq}{\leqslant}
\renewcommand{\geq}{\geqslant}
\renewcommand{\subset}{\subseteq}
\newtheorem{theorem}{Theorem}[section]
\newtheorem{prop}[theorem]{Proposition}
\theoremstyle{definition}
\newtheorem{definition}[theorem]{Definition}
\newtheorem{remark}[theorem]{Remark}
\numberwithin{equation}{section}
\begin{document}

\title[Probability inequalities for strongly left-invariant metric
semigroups/monoids]{Probability inequalities for strongly left-invariant
metric semigroups/monoids, including all Lie groups}

\author{Apoorva Khare}
\address[A.~Khare]{Indian Institute of Science, Bangalore -- 560012,
India; and Analysis and Probability Research Group, Bangalore -- 560012,
India}
\email{\tt khare@iisc.ac.in}

\dedicatory{To the memory of K.R.\ Parthasarathy, with admiration}

\date{\today}

\begin{abstract}
Recently, a general version of the Hoffmann-J{\o}rgensen inequality was
shown jointly with Rajaratnam [\textit{Ann.\ Probab.}\ 2017], which
(a)~improved the result even for real-valued variables, but also
(b)~simultaneously unified and extended several versions in the Banach
space literature, including that by
Hitczenko--Montgomery-Smith [\textit{Ann.\ Probab.}\ 2001],
as well as special cases and variants of results by
Johnson--Schechtman [\textit{Ann.\ Probab.}\ 1989] and
Klass--Nowicki [\textit{Ann.\ Probab.}\ 2000],
in addition to the original versions by Kahane and Hoffmann-J{\o}rgensen.
Moreover, our result with Rajaratnam was in a primitive framework: over
all semigroups with a bi-invariant metric; this includes Banach spaces as
well as compact and abelian Lie groups.

In this note we show the result even more generally: over every semigroup
$\mathscr{G}$ with a strongly left- (or right-)invariant metric. We also
prove some applications of this inequality over such $\mathscr{G}$,
extending Banach space-valued versions by
Hitczenko and Montgomery-Smith [\textit{Ann.\ Probab.}\ 2001] and by
Hoffmann-J{\o}rgensen [\textit{Studia Math.}\ 1974].
Furthermore, we show several other stochastic inequalities -- by 
Ottaviani--Skorohod, Mogul'skii, and L\'evy--Ottaviani --
as well as L\'evy's equivalence, again over $\mathscr{G}$ as above.
This setting of generality for $\mathscr{G}$ subsumes not only semigroups
with bi-invariant metric (thus extending the previously shown results),
but it also means that these results now hold over all Lie groups
(equipped with a left-invariant Riemannian metric).

We also explain why this primitive setting of strongly
left/right-invariant metric semigroups $\mathscr{G}$ is equivalent to
that of left/right-invariant metric monoids $\mathscr{G}_\circ$: each
such $\mathscr{G}$ embeds in some $\mathscr{G}_\circ$.
\end{abstract}

\subjclass[2020]{60E15 (primary); 60B10 (secondary)}

\keywords{Metric semigroup,
strongly left-invariant metric semigroup,
left-invariant metric monoid,
Hoffmann-J{\o}rgensen inequality, 
Ottaviani--Skorohod inequality, 
Mogul'skii inequality,
L\'evy--Ottaviani inequality,
L\'evy equivalence,
decreasing rearrangement,
universal constant}

\maketitle

\section{Introduction: strongly left/right-invariant metric semigroups}

In this work our goal is to extend various results in the probability
literature to very primitive settings -- e.g.\ from real- or Banach
space-valued random variables, to ones taking values in an arbitrary Lie
group, and even more general classes of variables. This continues a
series of recent joint works~\cite{KR1,KR2,KR3} by the author, which were
inspired in part by the seminal treatise~\cite{P} of Parthasarathy that
studied (probability) measures on very primitive structures: (separable)
metric spaces, metric/LCA groups, and so on.

In this note, our goal is to extend stochastic inequalities, tail
estimates, and convergence phenomena that were known to hold for random
variables over Banach spaces, or -- very recently -- semigroups with
bi-invariant metrics, to semigroups with strongly left-invariant (or
right-invariant) metrics. We begin by defining these latter notions.

\begin{definition}\hfill
\begin{enumerate}
\item As defined in \cite{KR1}, a \textit{bi-invariant metric semigroup}
consists of a semigroup $(\G, \cdot)$ with a bi-invariant metric $d_\G$
-- i.e.,
\begin{equation}\label{Ebiinv}
d_\G(ca,cb) = d_\G(a,b) = d_\G(ac,bc), \qquad \forall a,b,c \in \G.
\end{equation}
(In our previous joint works~\cite{KR1,KR2,KR3}, we refer to such a $\G$
as merely a metric semigroup; however, the bi-invariance will be
explicitly pointed out in this work, to distinguish from the following
notions.)

\item If only the first (respectively, second) equality in~\eqref{Ebiinv}
holds for all $a,b,c \in \G$, then we say that $\G$ is a \textit{left-}
(respectively, \textit{right-})\textit{invariant metric semigroup}.

\item Similarly, one defines a \textit{left/right/bi-invariant metric
monoid/group}.

\item Finally, a left-invariant metric semigroup $\G$ is \textit{strongly
left-invariant} if $d_\G(a, ab) = d_\G(b, b^2)$ for all $a,b \in \G$. One
similarly defines a strongly right-invariant metric semigroup.
\end{enumerate}
\end{definition}

\begin{remark}\label{R1}
In other words, $d_\G$ is strongly left-invariant if and only if for all
$b \in \G$, $d_\G(a,ab)$ is independent of $a \in \G$ (so one can set
$a=b$ in $\G$).
Thus every left-invariant metric monoid or group $(\G, \cdot, e, d_\G)$
is a strongly left-invariant semigroup, since $d_\G(a,ab) = d_\G(e,b)$.
(Ditto for right-invariant $d_\G$.) Similarly, it was shown in \cite{KR1}
(see~\eqref{Einv} below) that every bi-invariant metric semigroup is
strongly left- and right-invariant. Thus, adding an identity or
right-invariance automatically upgrades left-invariance to its ``strong''
version, and so the present setting subsumes both of these related
settings above/in previous works.
\end{remark}

The goal of this work is to extend results from random variables taking
values in Banach spaces (as is traditional by now) or even in
bi-invariant metric semigroups (as was done in recent joint works), to
strongly left/right-invariant metric semigroup-valued variables.
The motivation to extend results from Banach spaces to more primitive
frameworks is both classical and modern. Following its axiomatization and
systematic development, one of the cornerstones of twentieth century
probability theory has been to extend results for real-valued random
variables to $\mathbb{B}$-valued random variables, for $\mathbb{B}$ a
(separable) Banach space -- see e.g.\ the classic treatise \cite{LT}. Now
a natural theoretical question is to explore settings beyond Banach
spaces. In fact such questions have been widely studied in the past few
decades. We mention the classic monographs by Parthasarathy~\cite{P} and
Grenander~\cite{Gre}; as well as (among many others) the
Diaconis--Shahshahani work on random permutations~\cite{DS} and the
recent theory of (dense as well as sparse) graph limits -- which
has already been crystallized by Lov\'asz in book form
\cite{Lo}.\footnote{While every metric space isometrically embeds
into a Banach space by the Kuratowski embedding theorem, the study of
graphons with the cut-norm does not typically proceed using this
embedding.}
This activity continues to thrive; e.g.\ outside the (by now) traditional
Banach space setting, we list a few of the many works involving random
variables taking values in (possibly non-compact and non-abelian) Lie
groups, in random matrix theory \cite{BQ,GKZ,Jo} and its connections to
ergodic theory and geometry (see e.g.\ \cite{FK,HS,Pollicott}).

Thus our goal in this note is to extend several results in the vast
literature on Banach space-valued random variables -- or more generally,
results with variables valued in bi-invariant metric semigroups (these
further include discrete, abelian, compact, or amenable groups, see the
introduction to \cite{KR1} for more examples and \cite{KR1,KR2,KR3} for
the results) -- to an even more primitive setting: strongly
left/right-invariant metric semigroups. In particular, since these
include left-invariant metric groups, it follows that several results
which were previously known only over compact or abelian Lie groups, now
extend to every Lie group (including non-compact, non-abelian ones).

\begin{remark}
This work attempts to provide the \textit{most primitive setting} in
which the results stated below can be proved. To this end:
\begin{enumerate}
\item By Remark~\ref{R1}, random variables taking values in strongly
left/right-invariant metric semigroups $\G_{strong}$ subsume those taking
values in either left/right-invariant metric monoids $\G_\circ$ or in
bi-invariant metric semigroups $\G$. So working with such semigroups
$\G_{strong}$ is at least as general.

\item While the results below can be stated over any semigroup with a
metric, it is not clear (to the author) how to prove them in a more
primitive setting than strong left/right-invariance, since it is
indispensable in the proofs of most/all of the results below. One way to
avoid this technical hurdle (the word ``strong'') could be if -- as in
the bi-invariant case -- that every left/right-invariant metric semigroup
embeds isometrically and homomorphically inside a left/right-invariant
metric monoid $\G_\circ$, because then
$d_\G(a,ab) = d_{\G_\circ}(e,b) = d_\G(b,b^2)$. As we show in
Proposition~\ref{Pcounterex}, this does not always happen.
\end{enumerate}
Thus, we suspect that the strong left- or right-invariance of the metric
is perhaps the \textit{minimum} amount of structure required in order to
be able to show the Hoffmann-J{\o}rgensen, Ottaviani--Skorohod,
Mogul'skii, and L\'evy--Ottaviani inequalities and their applications
shown in this work.
\end{remark}

We end here with the punchline of the ``non-probabilistic part'' of the
paper: \textit{every strongly left/right-invariant metric semigroup
$\G_{strong}$ embeds isometrically and homomorphically into a
left/right-invariant metric monoid $(\G_\circ, e)$.} (In fact, we show
that $\G_{strong} \supseteq \G_\circ \setminus \{ e \}$, so that the
smallest $\G_\circ \supseteq \G$ is unique; see Theorem~\ref{Tembed}.)
Thus, our suspected ``most primitive setting'' for proving stochastic
inequalities, of working with $\G_{strong}$-valued random variables, is
actually \textit{equivalent} to working with $\G_\circ$-valued random
variables.
This equivalence parallels Proposition~\ref{Psubset} (drawn from recent
joint work~\cite{KR2}), which gives a similar equivalence for
bi-invariant metric semigroups vs.\ monoids.

\section{The Hoffmann-J{\o}rgensen inequality for strongly left-invariant
metric semigroups}

The first inequality that we extend to strongly left/right-invariant
metric semigroups is the Hoffmann-J{\o}rgensen inequality, which is used
to bound sums of independent random variables. We refer the reader to
e.g.~\cite{HM} for a detailed history of the inequality.

\subsection{The inequality over strongly left/right-invariant metric,
following previous variants}

Here we will present a few versions of the above inequality from the
literature, ending with a general, unifying variant that holds over all
bi-invariant metric semigroups, before extending it (and hence the
preceding variants) to the more general setting of strongly
left/right-invariant metric semigroups. We start with a version found in
the monograph by Ledoux--Talagrand, where the authors attribute the
result to Hoffmann-J{\o}rgensen~\cite{HJ} (see also Kahane~\cite{Kah}).

\begin{theorem}[{\cite[Proposition 6.7]{LT}}]\label{Thjlt}
Suppose $\bb$ is a separable Banach space, and $(\Omega, \mathscr{A},
\mu)$ is a probability space with $X_1, \dots, X_n \in L^0(\Omega,\bb)$
independent random variables.
For $1 \leqslant j \leqslant n$, define $S_j := X_1 + \cdots + X_j$ and
$U_n := \max_{1 \leqslant j \leqslant n} \| S_j \|$. Then
\[
\bp{U_n > 3t+s} \leqslant \bp{U_n > t}^2 +
\bp{\max_{1 \leqslant j \leqslant n} \| X_j \| > s},
\qquad \forall s,t \in (0,\infty).
\]
\end{theorem}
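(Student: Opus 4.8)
The plan is to prove the classical Hoffmann-J{\o}rgensen inequality by the standard ``first entry time'' (stopping time) argument, adapted to the maximum of partial sums. The key device is to decompose the event $\{U_n > 3t+s\}$ according to the first index $j$ at which $\|S_j\|$ exceeds $t$.

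First I would fix $s,t \in (0,\infty)$ and define the stopping time $\tau := \min\{ j : \|S_j\| > t \}$, with $\tau := \infty$ (or $n+1$) if no such $j$ exists. On the event $\{U_n > 3t+s\}$ we have $\tau \leqslant n$, and we partition this event as $\bigsqcup_{j=1}^n \{\tau = j\}$. Next I would bound, on $\{\tau = j\}$, the quantity $\|S_k\|$ for $k \geqslant j$ in terms of three pieces: the value $\|S_{j-1}\| \leqslant t$ (by minimality of $\tau$, with $S_0 = 0$), the single increment $\|X_j\| \leqslant \max_i \|X_i\|$, and the ``tail'' $\|S_k - S_j\| = \|X_{j+1} + \cdots + X_k\|$. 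Using the triangle inequality, $\|S_k\| \leqslant \|S_{j-1}\| + \|X_j\| + \|S_k - S_j\| \leqslant t + \max_i\|X_i\| + \|S_k - S_j\|$ for $k \geqslant j$. Hence on $\{\tau = j\}$, if additionally $\max_i \|X_i\| \leqslant s$, then $U_n > 3t+s$ forces $\max_{j \leqslant k \leqslant n} \|S_k - S_j\| > 2t$, i.e.\ $\max_{j \leqslant k \leqslant n}\|S_k - S_j\| > 2t$; and since $\|S_j - S_j\| = 0 < 2t$ we may as well write $\max_{j < k \leqslant n}\|S_k - S_j\| > 2t$. I would also observe that this last quantity further satisfies $\max_{j < k \leqslant n}\|S_k - S_j\| \leqslant 2 \max_{j < k \leqslant n}\|S_{k}\|$-type bounds are \emph{not} what is needed; rather one simply notes it is bounded by $t + \max_{j<k\le n}\|S_k - S_j\|$ --- actually the clean move is: on $\{\tau=j, \max_i\|X_i\|\le s\}$, the event $\{U_n > 3t+s\}$ implies $\{\max_{j<k\le n}\|S_k - S_j\| > 2t\}$, and $\max_{j<k\le n}\|S_k-S_j\| > 2t$ in turn forces (again by the triangle inequality comparing to $S_j$'s) that $\max_{k>j}$ of something exceeds $t$; but more directly, the tail max exceeds $2t > t$, so in particular it exceeds $t$. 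Writing $V_j := \max_{j < k \leqslant n}\|S_k - S_j\|$, we get $\{\tau=j\} \cap \{\max_i\|X_i\|\le s\} \cap \{U_n > 3t+s\} \subseteq \{\tau=j\} \cap \{V_j > 2t\} \subseteq \{\tau=j\}\cap\{V_j > t\}$.

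The crucial point is now the independence structure: the event $\{\tau = j\}$ depends only on $X_1,\dots,X_j$, whereas $V_j$ depends only on $X_{j+1},\dots,X_n$, so these are independent. Therefore
\[
\bp{U_n > 3t+s} \leqslant \bp{\max_{1\le i\le n}\|X_i\| > s} + \sum_{j=1}^n \bp{\tau = j}\, \bp{V_j > t}.
\]
Then I would bound each $\bp{V_j > t}$ by $\bp{U_n > t}$: indeed $V_j = \max_{j<k\le n}\|S_k - S_j\| \leqslant \max_{j<k\le n}(\|S_k\| + \|S_j\|)$, which is not immediately $\le U_n$; the honest bound is $V_j > t$ implies some $\|S_k - S_j\| > t$, hence $\|S_k\| + \|S_j\| > t$, hence $\max(\|S_k\|,\|S_j\|) > t/2$ --- that only gives $U_n > t/2$, which is too weak. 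So instead I would bound $V_j$ against a \emph{shifted} copy of $U_n$: the sequence $(S_{j+1}-S_j, S_{j+2}-S_j, \dots, S_n - S_j)$ is a partial-sum process of the independent variables $X_{j+1},\dots,X_n$, so $V_j$ has the same law as $U_{n-j}$ built from $(X_{j+1},\dots,X_n)$, and $\bp{V_j > t} \leqslant \bp{U_n > t}$ is \emph{false} in general; the correct route (as in Ledoux--Talagrand) is to NOT sum over $j$ at all, but rather note $\{\tau=j, V_j>t\} \subseteq \{U_n > t\} \cap \{\exists k>j: \|S_k-S_j\|>t\}$ and use that on $\{\tau=j\}$ one has $U_n > t$ automatically (since $\|S_j\|>t$), so $\bigcup_j \{\tau=j, V_j > t\} \subseteq \{U_n > t\} \cap \{\bigcup_j(\{\tau=j\}\cap\{V_j>t\})\}$, and by independence of $\{\tau=j\}$ from the tail,
\[
\bp{\textstyle\bigcup_{j}(\{\tau=j\}\cap\{V_j>t\})} \leqslant \sum_j \bp{\tau=j}\bp{V_j > t} \leqslant \bp{U_n>t}\sum_j\bp{\tau=j} \leqslant \bp{U_n>t}^2,
\]
where $\bp{V_j > t} \leqslant \bp{U_n > t}$ because $V_j = \|S_k - S_j\|$ for some $k > j$, and --- here is where I would be careful --- we bound $V_j > t \Rightarrow U_{n} > t$ using that $S_k - S_j = X_{j+1}+\cdots+X_k$ and $\bp{\max_{j<k\le n}\|X_{j+1}+\cdots+X_k\| > t} = \bp{U_{n-j} > t} \leqslant \bp{U_n > t}$ by adjoining zero increments, i.e.\ since the first $j$ of the variables $X_1,\dots,X_n$ contribute $S_j$ and $U_n \geqslant \max_{j<k\le n}\|S_k - S_j\|$ only after recentering --- the clean statement is that $\bp{U_n>t}$ is nondecreasing in $n$, so $\bp{V_j>t} = \bp{U_{n-j}' > t} \leqslant \bp{U_n' > t} = \bp{U_n > t}$ where the primed process uses $(X_{j+1},\dots,X_n)$ and has the same distribution class.

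The main obstacle, and the step I expect to require the most care, is exactly the chain of set inclusions bounding $\|S_k\|$ on $\{\tau = j\}$ by the three controlled pieces, and correctly identifying that $\{\tau=j\}$ is independent of the tail increments --- the arithmetic ``$3t$'' emerges precisely from $t$ (the pre-jump level) plus $2t$ (the tail-max level, which we then relax to $t$ for the probability bound), and keeping the strict/non-strict inequalities consistent with the open thresholds $s,t \in (0,\infty)$ is the only genuinely fiddly part. Everything else --- the triangle inequality, monotonicity of $n \mapsto \bp{U_n > t}$, and independence --- is routine for a separable Banach space, where $\|S_j\|$ and the maxima are honest measurable functions.
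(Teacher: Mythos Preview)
Your overall architecture is the standard one and is sound: split on the first entry time $\tau$, use the triangle inequality to reduce to the tail maximum $V_j := \max_{j<k\le n}\|S_k - S_j\|$, and exploit that $\{\tau=j\}\in\sigma(X_1,\dots,X_j)$ is independent of $V_j\in\sigma(X_{j+1},\dots,X_n)$. But there is a genuine gap at the crucial estimate, and your text shows you are aware something is off.

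The error is the gratuitous relaxation $\{V_j>2t\}\subset\{V_j>t\}$. Once you pass to $V_j>t$ you can no longer conclude $U_n>t$ (your own example with the $t/2$ shows this), and your attempted rescue via ``$\mathbb P(V_j>t)=\mathbb P(U'_{n-j}>t)\le \mathbb P(U_n>t)$ by monotonicity in $n$'' is false when the $X_i$ are merely independent and not i.i.d.: there is no reason $m\mapsto \mathbb P(U_m>t)$ is nondecreasing, and $U'_{n-j}$ (built from $X_{j+1},\dots,X_n$) need not be stochastically dominated by $U_n$. Concretely, with $n=2$, $X_1=-0.6t\,e$, $X_2=1.2t\,e$ one has $V_1=1.2t>t$ while $U_2=0.6t<t$. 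The fix is simply \emph{not} to relax: keep $V_j>2t$ and observe directly that $\|S_k-S_j\|>2t$ forces $\|S_k\|+\|S_j\|>2t$, hence $\max(\|S_k\|,\|S_j\|)>t$, i.e.\ $\{V_j>2t\}\subset\{U_n>t\}$. This is a deterministic inclusion (no independence needed), so $\mathbb P(V_j>2t)\le \mathbb P(U_n>t)$, and then
\[
\sum_{j=1}^n \mathbb P(\tau=j)\,\mathbb P(V_j>2t)\ \le\ \mathbb P(U_n>t)\sum_{j=1}^n \mathbb P(\tau=j)\ =\ \mathbb P(U_n>t)^2.
\]
With that one-line repair your argument is complete.

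For comparison, the paper does not prove Theorem~\ref{Thjlt} directly at all: it is obtained as the specialization of the general inequality (Theorem~\ref{T1}, equivalently Theorem~\ref{Thj}) with $\mathscr G=\mathbb B$, $z_0=z_1=0$, $k=2$, $n_1=n_2=1$, $t_1=t_2=t$, whence $I_0=\{1,2\}$ and~\eqref{Ehj} collapses to the stated bound. So your route is the classical direct one rather than the paper's ``special case of a master inequality'' route; both are fine, but yours needs the $2t$ kept intact.
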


Theorem~\ref{Thjlt} has led to several strengthenings and variants,
including by Johnson--Schechtman~\cite{JS}, Klass--Nowicki~\cite{KN}, and
Hitczenko and Montgomery-Smith~\cite{HM}. This last variant says:

\begin{theorem}[{\cite[Theorem 1]{HM}}]\label{Thm}
(Notation as in Theorem~\ref{Thjlt}.) For all $K \in \mathbb{Z}_{>0}$ and
$s,t \in (0,\infty)$,
\[
\bp{U_n > 2Kt + (K-1)s} \leqslant \frac{1}{K!} \left( \frac{\bp{U_n >
t}}{\bp{U_n \leqslant t}} \right)^K + \bp{\max_{1 \leqslant j \leqslant
n} \| X_j \| > s}.
\]
\end{theorem}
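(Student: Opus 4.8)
The plan is to decompose the path $(S_j)_{0\le j\le n}$ by successive first passages into at most $K$ ``excursions'', choosing the passage threshold $2t$ (the factor $2$ will ensure, in the second paragraph, that a fresh such excursion forces $U_n>t$) so that the term $(K-1)s$ in the hypothesis pays for the overshoots and the factorial term records the cost of producing $K$ of them. Put $M:=\max_{1\le j\le n}\|X_j\|$; since $\{M>s\}$ is the last summand on the right, it suffices to work on $\{M\le s\}$. There I would set $\sigma_0:=0$ and, recursively, $\sigma_r:=\inf\{\,j>\sigma_{r-1}:\|S_j-S_{\sigma_{r-1}}\|>2t\,\}$ (with $\inf\emptyset:=+\infty$). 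Because the single increment $X_{\sigma_r}$ that triggers the $r$-th passage has norm at most $s$, one gets $\|S_{\sigma_r}-S_{\sigma_{r-1}}\|\le 2t+s$ whenever $\sigma_r\le n$. For $j\le n$, writing $S_j=\sum_{i=1}^{\ell}(S_{\sigma_i}-S_{\sigma_{i-1}})+(S_j-S_{\sigma_\ell})$ with $\ell$ chosen so that $\sigma_\ell\le j<\sigma_{\ell+1}$, the triangle inequality gives $\|S_j\|\le \ell(2t+s)+2t$, while $\|S_j-S_{\sigma_\ell}\|\le 2t$ by minimality of $\sigma_{\ell+1}$; on $\{\sigma_K>n\}$ one has $\ell\le K-1$, so $U_n\le (K-1)(2t+s)+2t=2Kt+(K-1)s$ there. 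Hence $\{U_n>2Kt+(K-1)s\}\subseteq\{M>s\}\cup\{\sigma_K\le n\}$, and the problem reduces to the bound $\bp{\sigma_K\le n}\le\frac1{K!}\bigl(\bp{U_n>t}/\bp{U_n\le t}\bigr)^K$.

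For the reduced claim I would use independence of the increments. Put $p:=\bp{U_n>t}$ and $q:=\bp{U_n\le t}$. Conditioning on $\{\sigma_{r-1}=m\}$ (measurable with respect to $X_1,\dots,X_m$), the event $\{\sigma_r\le n\}$ becomes $\{\max_{m<j\le n}\|S_j-S_m\|>2t\}$, a function of the fresh variables $X_{m+1},\dots,X_n$ only; and at the first index $j$ realizing it the inequality $\|S_m\|+\|S_j\|\ge\|S_j-S_m\|>2t$ forces $\max(\|S_m\|,\|S_j\|)>t$, hence $U_n>t$. So each ``link'' has probability at most $p$, and iterating (with $\bp{\sigma_1\le n}=\bp{U_n>2t}\le p$) already yields $\bp{\sigma_K\le n}\le p^K$. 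The real content of the theorem is the strengthening of $p^K$ to $\frac1{K!}(p/q)^K$, and I expect this to be the crux. Heuristically, producing $K$ disjoint excursions is governed not by a geometric law with parameter $p$ but by a ``sampling-without-replacement'' effect: each completed excursion consumes part of the finite index set, so later excursions become genuinely rarer, and the total intensity of excursions turns out to be $p/q=p+p^2+\cdots$ spread over the at most $n$ admissible slots; a count of disjoint blocks with such intensity has $K$-th factorial moment at most $(p/q)^K$, whence $\bp{\sigma_K\le n}\le\bp{N\ge K}\le\mathbb E\binom{N}{K}\le\frac1{K!}(p/q)^K$, where $N$ denotes the number of disjoint excursions of magnitude $>2t$ in the path (and $\{\sigma_K\le n\}\subseteq\{N\ge K\}$ by the greedy construction above). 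Making this rigorous is exactly the place where I would invoke (or re-derive) the decreasing-rearrangement comparison for sums of independent variables due to Hitczenko--Montgomery-Smith: it is what lets one replace the crude chaining of conditional probabilities by the factorial-moment estimate, and it also accounts for the normalization by $\bp{U_n\le t}^K$.

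I would close by noting that none of this used the linear structure of $\bb$: replacing $\|S_j\|$ throughout by the distance $d_\G(S_1,S_j)$, the increment norms $\|X_j\|$ by $d_\G(S_{j-1},S_j)$ -- which by \emph{strong} left-invariance equals $d_\G(X_j,X_j^2)$ and hence has a law not depending on the history -- and using only the triangle inequality and the left-invariance of $d_\G$ in the telescoping estimate, the same proof delivers the inequality over an arbitrary strongly left-invariant metric semigroup $\G$ (and, by the right-handed mirror image, over strongly right-invariant ones), in particular over every Lie group with a left-invariant Riemannian metric. The step where strong left/right-invariance is genuinely indispensable is precisely the distributional identification of the ``shifted increments'' $d_\G(S_m,S_j)$ with distances built from fresh variables, which is what makes the independence argument of the second paragraph go through.
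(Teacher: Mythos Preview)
Your deterministic reduction via the stopping times $\sigma_r$ is correct and is exactly the skeleton of the original Hitczenko--Montgomery-Smith argument: on $\{M\le s\}\cap\{\sigma_K>n\}$ one does get $U_n\le 2Kt+(K-1)s$, so everything comes down to the bound $\bp{\sigma_K\le n}\le\frac{1}{K!}\bigl(p/q\bigr)^K$. This is precisely where your write-up has a genuine gap. Your ``each link has probability $\le p$'' step is fine (the inclusion $\{\max_{m<j\le n}\|S_j-S_m\|>2t\}\subseteq\{U_n>t\}$ does give $\bp{\sigma_r\le n\mid\sigma_{r-1}=m}\le p$, since the left event depends only on $X_{m+1},\dots,X_n$ and hence is independent of the conditioning), and it yields $p^K$. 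But the strengthening to $\frac{1}{K!}(p/q)^K$ is left as a heuristic: the random variable $N$ (``number of disjoint excursions of magnitude $>2t$'') is never defined precisely enough to compute $\mathbb{E}\binom{N}{K}$, and the asserted inequality $\mathbb{E}\binom{N}{K}\le\frac{1}{K!}(p/q)^K$ is essentially the content of the theorem. Invoking ``the decreasing-rearrangement comparison due to Hitczenko--Montgomery-Smith'' at this point is circular unless you name a specific lemma from \cite{HM} that is logically prior to their Theorem~1 and actually delivers the factorial and the $q^{-K}$; you have not done so. The mechanism in \cite{HM} (and in the proof of Theorem~\ref{Thj} in \cite{KR1}) that produces both the $1/K!$ and the denominator $q^K$ is a careful combinatorial/inductive argument over the ordered positions $1\le m_1<\cdots<m_K\le n$ of the successive passages, and that argument is simply absent here.

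For comparison with the paper: the paper does not give an independent proof of Theorem~\ref{Thm} at all. It is quoted from \cite{HM}, and the paper's route to it is to observe that the more general Theorem~\ref{Thj} (proved in \cite{KR1}, and extended in this paper to strongly left-invariant metric semigroups as Theorem~\ref{T1}) specializes to Theorem~\ref{Thm} upon setting $k=1$, $n_1=K$, $t_1=t$. So the paper's ``proof'' is a one-line specialization of a broader inequality whose proof already encodes the combinatorics you have skipped. Your final paragraph about porting the argument to strongly left-invariant $\G$ via $d_\G(S_{j-1},S_j)=d_\G(X_j,X_j^2)$ is correct in spirit and aligns with the paper's theme, but it inherits the same gap.
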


Notice from their statements that neither of Theorems~\ref{Thjlt}
and~\ref{Thm} immediately follow from the other. They were both
simultaneously extended in recent work \cite{KR1}, by the next variant
that we state. To do so, we require some notation.

\begin{definition}
Suppose $(\G, d_\G)$ is a separable semigroup with a left-invariant
metric, with Borel $\sigma$-algebra $\mathscr{B}_\G$. Given integers $1
\leqslant j \leqslant n$ and random variables $X_1, \dots, X_n : (\Omega,
\mathscr{A}, \mu) \to (\G, \mathscr{B}_\G)$, define
\begin{equation}\label{ESMX}
S_j(\omega) := X_1(\omega) \cdots X_j(\omega), \qquad
M_j(\omega) := \max_{1 \leqslant i \leqslant j} d_\G(z_0, z_0
X_i(\omega)),
\end{equation}

\noindent where $z_0 \in \G$ is arbitrary.
\end{definition}

\begin{remark}
Clearly, $M_j$ is independent of $z_0 \in \G$ in any strongly
left/right-invariant metric semigroup, hence -- as observed above -- in
any left/right-invariant metric monoid. We explain presently why this
implies the same fact over every bi-invariant metric semigroup as
in~\cite{KR1}.
\end{remark}

With this notation at hand, we state the next -- and rather general --
variant of the Hoffmann-J{\o}rgensen inequality:

\begin{theorem}[{\cite[Theorem~A]{KR1}}]\label{Thj}
Suppose $(\G, d_\G)$ is a separable bi-invariant metric semigroup, $z_0,
z_1 \in \G$ are fixed, and $X_1, \dots$, $X_n \in L^0(\Omega,\G)$ are
independent.
Also fix integers $0 < k, n_1, \dots, n_k \in \mathbb{Z}$ and nonnegative
scalars $t_1, \dots, t_k, s \in [0,\infty)$, and define
\begin{equation}
U_n := \max_{1 \leqslant j \leqslant n} d_\G(z_1, z_0 S_j), \ \
I_0 := \{ 1 \leqslant i \leqslant k : \bp{U_n \leqslant t_i}^{n_i -
\delta_{i1}} \leqslant \frac{1}{n_i!} \},
\end{equation}

\noindent where $\delta_{i1}$ denotes the Kronecker delta.
Now if $\sum_{i=1}^k n_i \leqslant n+1$, then
\begin{align}\label{Ehj}
&\ \bp{U_n > (2 n_1 - 1) t_1 + 2 \sum_{i=2}^k n_i t_i + \left(
\sum_{i=1}^k n_i - 1 \right) s}\\
\leqslant &\ \bp{U_n \leqslant t_1}^{{\bf 1}_{1 \notin I_0}} \prod_{i \in
I_0} \bp{U_n > t_i}^{n_i} \prod_{i \notin I_0} \frac{1}{n_i!} \left(
\frac{\bp{U_n > t_i}}{\bp{U_n \leqslant t_i}} \right)^{n_i}\notag\\
&\ + \bp{M_n > s}.\notag
\end{align}

More generally, define
\begin{align*}
&\ K := \sum_{i=1}^k n_i, \qquad Y_j := d_\G(z_0, z_0 X_j),\\
&\ Y_{(1)} := \min(Y_1, \dots, Y_n), \quad \dots, \quad
Y_{(n)} := \max(Y_1, \dots, Y_n),
\end{align*}

\noindent so that $Y_{(j)}$ are the order statistics of the $Y_j$. Then
the above inequality can be strengthened by replacing $\bp{M_n > s}$ by
\[
\bp{\sum_{j=n-K+2}^n Y_{(j)} > (K-1)s}.
\]
\end{theorem}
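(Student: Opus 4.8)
The plan is to run a \emph{first-passage ladder} argument, exploiting three elementary consequences of bi-invariance (equivalently, of the strong left-invariance recorded in Remark~\ref{R1}). Write $V_j := d_\G(z_1, z_0 S_j)$, so $U_n = \max_{1\le j\le n} V_j$, and $W_j^{(m)} := X_{m+1}\cdots X_j$ for $j>m$, so $S_j = S_m W_j^{(m)}$. First, the \emph{restart identity}: by strong left-invariance, for $m<j$,
\[
d_\G(z_0 S_m, z_0 S_j) = d_\G\bigl(z_0 S_m,\, z_0 S_m W_j^{(m)}\bigr) = d_\G\bigl(z_0,\, z_0 W_j^{(m)}\bigr),
\]
which depends only on $X_{m+1},\dots,X_n$ and is therefore independent of $\sigma(X_1,\dots,X_m)$. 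Second, the \emph{increment bound}: the same identity gives $d_\G(z_0 S_{j-1}, z_0 S_j) = d_\G(z_0, z_0 X_j) = Y_j$, whence $V_j \le V_{j-1} + Y_j$ by the triangle inequality. Third, the \emph{containment}: since $d_\G(z_0 S_m, z_0 S_j) \le V_m + V_j \le 2U_n$, one has the pointwise inclusion $\{\, d_\G(z_0 S_m, z_0 S_j) > 2t \,\} \subseteq \{U_n > t\}$ for every $m<j$ and every $t$.

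Next I would build the ladder. Set $\tau_1 := \min\{ j : V_j > t_1\}$, and for $r \ge 2$ let $\tau_r$ be the first passage of the restarted suffix $j \mapsto d_\G(z_0 S_{\tau_{r-1}}, z_0 S_j)$ above level $2 t_{i(r)}$, the level index $i(r)$ being chosen so that the first $n_1$ rungs use $t_1$, the next $n_2$ use $t_2$, and so on (so rung $1$ crosses $t_1$ and every later rung crosses $2 t_{i(r)}$); the hypothesis $\sum_i n_i \le n+1$ guarantees room for all $K$ rungs. Using the increment bound at rungs $1,\dots,K-1$ and the triangle inequality to telescope, one checks the deterministic inclusion
\[
\{\, U_n > (2n_1-1)t_1 + 2\textstyle\sum_{i=2}^k n_i t_i + (K-1)s \,\} \cap \Bigl\{\, \textstyle\sum_{j=n-K+2}^n Y_{(j)} \le (K-1)s \,\Bigr\} \subseteq \{\, \tau_K \le n \,\},
\]
since the $K$ rung-levels sum to exactly $(2n_1-1)t_1 + 2\sum_{i\ge2} n_i t_i$ while only the $K-1$ increments $Y_{\tau_1},\dots,Y_{\tau_{K-1}}$ are ever invoked, so bounding their sum by the top $K-1$ order statistics is what upgrades $\bp{M_n>s}$ to the sharper error term. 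Because each rung is a fresh suffix crossing, the restart identity makes its conditional probability independent of the past and equal to its unconditional value, which is $\bp{U_n > t_{i(r)}}$ by definition when $r=1$ and $\le\bp{U_n > t_{i(r)}}$ by the containment when $r\ge2$. Multiplying these along the ladder yields the \emph{unrefined} estimate $\bp{\tau_K \le n} \le \prod_{i=1}^k \bp{U_n > t_i}^{n_i}$, which is exactly the product in~\eqref{Ehj} when every $i\in I_0$.

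Obtaining the factorial refinement for indices $i\notin I_0$ is the crux, and I expect it to be the main obstacle. The idea is to replace the crude ``multiply by $\bp{U_n>t_i}$ at each rung'' step by a factorial-moment (Bonferroni) estimate on the number $N_i$ of level-$i$ rung crossings: writing $\mathbf 1_{N_i \ge n_i} \le \binom{N_i}{n_i}$ and expanding $\binom{N_i}{n_i}$ as a sum over increasing $n_i$-tuples of crossing indices, the restart identity and containment should give a product bound on each joint term in which the per-index factors no longer depend on the previous crossing time; summing the resulting symmetric product over the ordered index-simplex produces the volume factor $\tfrac1{n_i!}$, while summing a geometric series over the sub-threshold returns between crossings contributes the denominators $\bp{U_n \le t_i}$, giving $\tfrac{1}{n_i!}\bigl(\bp{U_n>t_i}/\bp{U_n\le t_i}\bigr)^{n_i}$ (with one fewer denominator at the distinguished level $t_1$, whose first crossing is a genuine first passage with no preceding excursions — this accounts for the exponent $\mathbf 1_{1\notin I_0}$). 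Carrying this out so that the different levels factor cleanly, and verifying that $I_0$ is precisely the set of levels on which the unrefined bound $\bp{U_n>t_i}^{n_i}$ is the smaller of the two (i.e.\ where $\bp{U_n\le t_i}^{n_i-\delta_{i1}}\le 1/n_i!$), is the delicate bookkeeping that exhibits~\eqref{Ehj} as the pointwise minimum of the two families of estimates.

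Finally I would assemble the pieces: the deterministic inclusion bounds $\bp{U_n>(\text{threshold})}$ by $\bp{\tau_K\le n} + \bp{\sum_{j=n-K+2}^n Y_{(j)} > (K-1)s}$, and the two estimates for $\bp{\tau_K\le n}$ — the unrefined product and the factorial-refined product, selected level-by-level according to $I_0$ — produce the right-hand side of~\eqref{Ehj} together with its order-statistic strengthening (the weaker requirement $\{M_n>s\}$ being recovered since $\{\sum_{j=n-K+2}^n Y_{(j)} > (K-1)s\}\subseteq\{M_n>s\}$). The single genuinely hard step is the factorial/simplex refinement; the ladder construction, the three bi-invariance identities, and the threshold and order-statistic accounting are all routine once the restart identity is in hand.
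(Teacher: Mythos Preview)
The paper does not prove Theorem~\ref{Thj} here; it is quoted from \cite{KR1}, and the only proof-level content appears in the proof of Theorem~\ref{T1}, which says to rerun the argument of \cite{KR1} with bi-invariance replaced by strong left-invariance at three specific points (the well-definedness of $Y_j$, the identity $d_\G(z_0 S_{m-1}, z_0 S_m) = Y_m$, and the bound $d_\G(z_0 S_j, z_0 S_k) \le 2U_n$), together with a typo fix in the definition of the first-passage probabilities $p_{\beta,t}$. Your three ``bi-invariance consequences'' (restart identity, increment bound, containment) are exactly these three points, and your ladder of stopping times $\tau_1 < \tau_2 < \cdots$ is precisely the $p_{\beta,t}$ structure the paper alludes to. So architecturally you have reconstructed the \cite{KR1} argument, and your unrefined product bound $\prod_i \bp{U_n > t_i}^{n_i}$ is correct.

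The gap is exactly where you say it is: the factorial refinement for $i \notin I_0$. Your heuristic --- Bonferroni via $\mathbf{1}_{N_i \ge n_i} \le \binom{N_i}{n_i}$, then a simplex-volume $1/n_i!$, then a separate geometric series over ``sub-threshold returns'' producing the denominators $\bp{U_n \le t_i}$ --- is not a proof, and the mechanism you describe is not quite how the bound arises in \cite{HM,KR1}: there the $1/n_i!$ and the factor $\bp{U_n \le t_i}^{-n_i}$ emerge together from a single combinatorial estimate on sums of products of the first-passage probabilities $p_{\beta,t}$ (whose total over $\beta$ is $\bp{U_n > t}$) indexed by ordered tuples of rung positions, not from two decoupled steps. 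It is also not evident that your route handles the interaction between the different levels $t_1,\dots,t_k$ so as to produce the clean product in~\eqref{Ehj} together with the distinguished prefactor $\bp{U_n \le t_1}^{\mathbf{1}_{1 \notin I_0}}$; the asymmetry of the first rung (level $t_1$ rather than $2t_1$) has to be threaded through the entire estimate, not appended at the end. You have correctly located the crux, but completing the proof requires actually executing that combinatorial bound rather than describing its intended shape.
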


It is this result whose setting we weaken, to obtain the main result of
this section:

\begin{theorem}\label{T1}
Theorem~\ref{Thj} holds more generally, over every strongly
left/right-invariant metric semigroup (equivalently by
Theorem~\ref{Tembed}, over every left/right-invariant metric monoid).
\end{theorem}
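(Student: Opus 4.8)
The plan is to revisit the proof of Theorem~\ref{Thj} in \cite{KR1} and isolate precisely where bi-invariance of $d_\G$ was invoked, then replace each such use by an argument that needs only strong left-invariance (the right-invariant case being symmetric, after passing to the opposite semigroup). The key structural fact we would exploit is the one flagged in Remark~\ref{R1}: in a strongly left-invariant metric semigroup, $d_\G(a,ab)$ is independent of $a$, so the quantities $M_j$ and $Y_j := d_\G(z_0, z_0 X_j)$ in~\eqref{ESMX} are genuinely well defined (independent of $z_0$), exactly as in the bi-invariant setting. First I would check that the two ``geometric'' inequalities underpinning the Hoffmann-J{\o}rgensen argument survive: (i) a subadditivity/triangle estimate of the form $d_\G(z_1, z_0 S_j) \leq d_\G(z_1, z_0 S_{i}) + d_\G(z_0 S_i, z_0 S_j)$ for $i < j$, where the second term, by left-invariance, equals $d_\G(e', \text{(tail product)})$-type quantities controlled by $M_n$ or the order statistics $Y_{(\cdot)}$; and (ii) the fact that for a ``last-entrance'' decomposition $S_j = S_\tau \cdot (X_{\tau+1}\cdots X_j)$ one can peel off $S_\tau$ on the left using left-invariance of the metric. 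Since all the multiplications in $S_j = X_1\cdots X_j$ accumulate on the left and the $U_n$ in Theorem~\ref{Thj} is built from $d_\G(z_1, z_0 S_j)$ (left-translated), the stopping-time / iterated-conditioning machinery should go through with left-invariance alone.

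The core of the argument in \cite{KR1} is an iterated stopping-time construction: one defines stopping times $\tau_1 < \tau_2 < \cdots$ at the successive times that a running maximum of $d_\G(z_1, z_0 S_j)$ crosses the thresholds $t_i$, uses independence to factor the probabilities $\bp{U_n > t_i}$ across disjoint blocks, and controls the ``overshoot'' at each crossing by the increment $d_\G(z_0 S_{\tau-1}, z_0 S_\tau) = d_\G(z_0 S_{\tau-1}, z_0 S_{\tau-1} X_\tau)$, which by strong left-invariance is exactly $Y_\tau$ in distribution-free fashion. Concretely: after a crossing at time $\tau$, one writes, for $j > \tau$,
\[
d_\G(z_1, z_0 S_j) \leq d_\G(z_1, z_0 S_{\tau-1}) + d_\G(z_0 S_{\tau-1}, z_0 S_{\tau-1} X_\tau) + d_\G(z_0 S_\tau, z_0 S_j),
\]
where the last term, again by left-invariance, is $d_\G(z_0 S_\tau, z_0 S_\tau (X_{\tau+1}\cdots X_j))$, a fresh maximum statistic for the shifted (independent) block. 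This is the one place where bi-invariance would have been tempting — to split off $S_{\tau-1}$ on the \emph{left} of $z_0 S_{\tau-1} X_\tau$ when comparing to $z_1$ — but the left-invariant triangle inequality above circumvents it, at the cost of carrying the extra $Y_\tau$ term, which is already accounted for in the $\sum Y_{(j)}$ (and hence $M_n$) term on the right-hand side of~\eqref{Ehj}.

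The main obstacle I anticipate is bookkeeping rather than conceptual: one must verify that each of the $K-1 = \sum n_i - 1$ overshoot terms generated across the $k$ distinct thresholds (with multiplicities $n_i$) is bounded by a \emph{distinct} order statistic $Y_{(j)}$ with $j \geq n-K+2$, so that their sum is dominated by $\sum_{j=n-K+2}^n Y_{(j)}$; this is where the hypothesis $\sum n_i \leq n+1$ is used, and where one must be careful that the stopping times are strictly increasing so that the $Y_{\tau_\ell}$ are evaluated at distinct indices. I would handle this by reproducing the combinatorial lemma of \cite{KR1} verbatim — it is purely about order statistics of real random variables and does not see the semigroup structure at all — and only re-deriving the metric estimates that feed into it. A secondary point to check is that the coefficient $(2n_1-1)t_1 + 2\sum_{i\geq 2} n_i t_i$ on the left is reproduced correctly: the asymmetry in the $t_1$ coefficient comes from the Kronecker delta $\delta_{i1}$ in the definition of $I_0$ (i.e., from whether one uses $\bp{U_n \leq t_1}$ or $\bp{U_n > t_1}$ at the first threshold), and this part of the argument is insensitive to replacing bi-invariance by strong left-invariance. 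Once these pieces are in place, the ``more general'' refinement with the order statistics follows as in \cite{KR1}, and Theorem~\ref{T1} is proved; the equivalence with the monoid formulation is then immediate from Theorem~\ref{Tembed}.
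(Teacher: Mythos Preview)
Your proposal is correct and takes essentially the same approach as the paper: both revisit the proof in \cite{KR1} and observe that bi-invariance is invoked only to show that $Y_j = d_\G(z_0, z_0 X_j)$ (and hence $M_j$) is independent of $z_0$, and that the overshoot increment $d_\G(z_0 S_{m-1}, z_0 S_m)$ equals $Y_m$ --- both of which follow immediately from strong left-invariance. The paper additionally records (i) that the estimate $\sup_{j,k} d_\G(z_0 S_k, z_0 S_j) \leq 2 U_n$ from \cite{HM} needs only ordinary left-invariance and the triangle inequality through $z_1$, and (ii) a small typo fix in the definition of $p_{\beta,t}$ in \cite{KR1}, but these are minor points that do not alter your strategy.
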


As explained in~\cite{KR1}, Theorem~\ref{T1} specializes to
Theorem~\ref{Thjlt} by setting
\[
\G = \bb, \quad z_0 = z_1 = 0, \quad k = 2, \quad n_1 = n_2 = 1, \quad
t_1 = t_2 = t,
\]
so that $I_0 = \{ 1, 2 \}$. It also specializes to Theorem~\ref{Thm}, by
setting $\G = \bb, \ z_0 = z_1 = 0, \ k = 1, \ n_1 = K, \ t_1 = t$.
But moreover, Theorem~\ref{T1} specializes to Theorem~\ref{Thj} itself
(which implied the other two results above) via Proposition~\ref{Psubset}
below: every bi-invariant metric semigroup embeds into a bi-invariant
metric monoid, hence is a strongly left/right-invariant metric semigroup.

\subsection{Additional previous variants}

The proof of Theorem~\ref{T1} is deferred to the next subsection,
together with the fact that it implies Theorem~\ref{Thj}.
For now, we elaborate in some detail the connection of Theorems~\ref{Thj}
and~\ref{T1} to (special cases of) results by Johnson--Schechtman
\cite{JS} and Klass--Nowicki \cite{KN}. First, Johnson--Schechtman showed
by iterating the ``original'' Hoffmann-J{\o}rgensen inequality:

\begin{theorem}[{\cite[Lemma 6]{JS}}]\label{Tjs}
If $X_1, \dots, X_n$ are independent non-negative real-valued random
variables, and $z_0 = z_1 = 0$ (so that $S_n = \| S_n \| = U_n$), then
\[
\bp{U_n > (2k-1) t} \leq \bp{M_n > t} + \bp{U_n > t}^k, \qquad \forall t
\in (0,\infty), \ k \in \mathbb{Z}_{>0}.
\]
\end{theorem}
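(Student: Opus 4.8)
My plan is to prove the inequality directly by induction on $k$, iterating a first-passage decomposition. First I would record the simplifications forced by $z_0 = z_1 = 0$ and $X_i \geqslant 0$: here $U_n = S_n$ (the partial sums are non-decreasing, so their running maximum is attained at $j = n$) and $M_n = \max_{1 \leqslant i \leqslant n} X_i$. Non-negativity also yields the monotonicity $S_n - S_j \leqslant S_n$ for every $j$, whence $\bp{S_n - S_j > u} \leqslant \bp{U_n > u}$ for all $u \geqslant 0$; this is what allows each tail sum to be controlled by $U_n$ rather than by a separate quantity, and it is ultimately responsible for the sharper constant $(2k-1)$ here (as opposed to $2k$ in the general, signed setting of Theorems~\ref{Thjlt}--\ref{Thj}).

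The crux is to strengthen the statement so that the induction closes without leakage. A naive induction that peels off one factor of $\bp{U_n > t}$ per step also generates a spurious factor of $\bp{M_n > t}$ per step, leaving an unwanted cross term. To avoid this I would instead prove the sharper claim
\[
(\star_k): \qquad \bp{U_n > (2k-1)t,\ M_n \leqslant t} \leqslant \bp{U_n > t}^k, \qquad k \in \mathbb{Z}_{>0},
\]
from which the theorem follows at once by splitting $\{U_n > (2k-1)t\}$ according to whether $M_n > t$ or $M_n \leqslant t$ and bounding the first piece by $\bp{M_n > t}$. The base case $(\star_1)$ is trivial, since the event in question is contained in $\{U_n > t\}$.

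For the inductive step I would, on the event $\{U_n > (2k-1)t,\ M_n \leqslant t\}$, introduce the first passage time $\tau := \min\{ j : S_j > t \}$, which exists because $S_n > t$. Minimality gives $S_{\tau - 1} \leqslant t$, while $M_n \leqslant t$ forces the overshoot jump $X_\tau \leqslant t$, so $S_\tau \leqslant 2t$; combined with $U_n = S_n > (2k-1)t$ this yields $S_n - S_\tau > (2k-3)t = (2(k-1)-1)t$. Decomposing over the value $\tau = j$, the event $\{\tau = j,\ \max_{1 \leqslant i \leqslant j} X_i \leqslant t\}$ depends only on $X_1, \dots, X_j$, whereas the tail event $\{S_n - S_j > (2(k-1)-1)t,\ \max_{j < i \leqslant n} X_i \leqslant t\}$ depends only on $X_{j+1}, \dots, X_n$, so independence factorizes the joint probability. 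Applying $(\star_{k-1})$ to the tail variables $X_{j+1}, \dots, X_n$ bounds the tail factor by $\bp{S_n - S_j > t}^{k-1} \leqslant \bp{U_n > t}^{k-1}$, and summing $\bp{\tau = j,\ \max_{1 \leqslant i \leqslant j} X_i \leqslant t}$ over $j$ contributes at most $\bp{\tau \leqslant n} = \bp{U_n > t}$; multiplying gives $(\star_k)$.

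The main obstacle -- and the only genuinely delicate point -- is finding the correct inductive invariant $(\star_k)$: one must carry the conditioning event $\{M_n \leqslant t\}$ (and the matching tail constraint $\max_{j < i \leqslant n} X_i \leqslant t$) through the induction rather than invoking the full Hoffmann--J\o rgensen step, so that the estimate $S_\tau \leqslant 2t$ is available at every level and no extra $\bp{M_n > t}$ factors accumulate. Everything else is routine bookkeeping: the measurability and independence of the two factors in the first-passage decomposition, and the monotonicity estimates coming from $X_i \geqslant 0$. Because the gain in the constant relies essentially on non-negativity, I would \emph{not} attempt to deduce this as a special case of Theorem~\ref{Thj} or Theorem~\ref{T1}.
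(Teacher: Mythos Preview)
Your inductive proof via the strengthened invariant $(\star_k)$ is correct; the first-passage decomposition, the overshoot bound $S_\tau \leqslant 2t$ under $M_n \leqslant t$, and the application of $(\star_{k-1})$ to the tail block $X_{j+1},\dots,X_n$ all go through as you describe.

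Note, however, that the paper does \emph{not} prove Theorem~\ref{Tjs}: it is quoted from \cite[Lemma~6]{JS}, and the paper only remarks that its own Theorem~\ref{Thj} (hence Theorem~\ref{T1}) yields the \emph{weaker} bound with $\bp{U_n > t/2}$ in place of $\bp{U_n > t}$, via the specialization $n_1=\cdots=n_k=1$, $t_1=s=t$, $t_2=\cdots=t_k=t/2$. Your closing observation is therefore exactly on point: the sharp constant $(2k-1)$ genuinely depends on non-negativity (through $S_n - S_j \leqslant S_n$), and cannot be recovered from the general semigroup inequality. What you have written is essentially the classical Johnson--Schechtman iteration, carried out cleanly with the conditioning on $\{M_n \leqslant t\}$ threaded through the induction so that no spurious $\bp{M_n > t}$ terms accumulate.
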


Theorem~\ref{Thj} (and hence Theorem~\ref{T1}) implies a weaker form of
this result, in which $\bp{U_n > t}$ is replaced by $\bp{U_n > t/2}$. To
see this, set
\[
\G = \mathbb{R}, \ \ z_0 = z_1 = 0, \ \ n_1 = \cdots = n_k = 1, \ \ t_1 =
s = t, \ t_2 = \cdots = t_k = t/2,
\]
so that $I_0 = \{ 1, \dots, k \}$. Also note that Theorem~\ref{Tjs} holds
for non-negative variables, so $\G$ is necessarily $\mathbb{R}$, while
Theorem~\ref{T1} holds for all $\G$ -- including possibly negative
real-valued random variables -- and its assertion in this generality is
slightly weaker.

Second, Hitczenko and Montgomery-Smith term their result
(Theorem~\ref{Thm}) the \textit{Klass--Nowicki inequality}. This is
because Klass--Nowicki had previously shown a slightly different -- but
related -- result to Theorem~\ref{Thm}:

\begin{theorem}[{\cite[Theorem 1.1]{KN}}]\label{Tkn}
Suppose $X_1, \dots, X_n$ are independent Banach space-valued random
variables, and we define $S_n, U_n$ as above with $z_0 = z_1 = 0$. Also
set $\lambda := \bp{U_n \geq 1}$ and suppose $\lambda < 1$. If the $X_j$
are non-negative, then
\begin{equation}\label{Ekn1}
\bp{ \| S_n \| \geq k + Y_{(n)} + \cdots + Y_{(n-k+2)} }\leq \frac{1}{k!}
[ n (1 - \sqrt[n]{1-\lambda}) ]^k, \qquad \forall k \geq 1,
\end{equation}

\noindent whereas if the $X_j$ are symmetric, then
\begin{equation}\label{Ekn2}
\bp{ U_n \geq k + Y_{(n)} + \cdots + Y_{(n-k+2)} } \leq
\frac{2^{k-1}}{k!} [ n (1 - \sqrt[n]{1-\lambda}) ]^k,
\qquad \forall k \geq 1.
\end{equation}
\end{theorem}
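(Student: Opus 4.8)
The target constant factorizes suggestively, and this dictates the plan. Writing $\frac{1}{k!}[n(1-\sqrt[n]{1-\lambda})]^k$, the factor $\frac{1}{k!}(\cdot)^k$ is precisely the bound one obtains for the probability that \emph{at least $k$ of $n$ independent events occur}, while $n(1-\sqrt[n]{1-\lambda})$ is exactly what the constraint ``the $n$ events all fail with probability $1-\lambda$'' forces upon the sum of their probabilities, via AM--GM. So my plan is: (i) isolate a purely combinatorial core producing the constant; (ii) exhibit a count $N$ of ``excursions'' whose vanishing, $\{N=0\}$, is the event $\{U_n<1\}$, so that $\bp{N=0}=1-\lambda$ is automatic; and (iii) show the target event is contained in $\{N\ge k\}$. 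I note at the outset that the master inequality (Theorem~\ref{Thj}) does \emph{not} apply directly: there the top order statistics enter only an \emph{additive} error term of the form $\bp{\sum_{j=n-K+2}^n Y_{(j)} > (K-1)s}$ against a \emph{deterministic} threshold, whereas in \eqref{Ekn1}--\eqref{Ekn2} the same order statistics form a \emph{random} threshold inside the probability. The two are genuinely different, so a direct excursion argument is required.

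\emph{The combinatorial core.} Let $A_1,\dots,A_n$ be independent with $p_i:=\bp{A_i}$ and $N:=\sum_i \mathbf{1}_{A_i}$. Since $\binom{N}{k}=\sum_{|I|=k}\prod_{i\in I}\mathbf{1}_{A_i}$ and $\mathbf{1}_{N\ge k}\le \binom{N}{k}$, taking expectations and using independence gives $\bp{N\ge k}\le e_k(p_1,\dots,p_n)\le \frac{1}{k!}(\sum_i p_i)^k$, where $e_k$ is the $k$-th elementary symmetric polynomial and the last step is Maclaurin's inequality. If moreover $\prod_i(1-p_i)=1-\lambda$, then AM--GM applied to $\{1-p_i\}$ yields $\frac1n\sum_i(1-p_i)\ge (1-\lambda)^{1/n}$, i.e.\ $\sum_i p_i\le n(1-\sqrt[n]{1-\lambda})$; combining gives exactly the right-hand side of \eqref{Ekn1}.

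\emph{The reduction.} In the non-negative case $S_1\le S_2\le\cdots$ is non-decreasing and $U_n=\|S_n\|$. I would decompose the path into first-passage excursions: set $\tau_0:=0$ and $\tau_r:=\min\{j>\tau_{r-1}:\|S_j-S_{\tau_{r-1}}\|\ge 1\}$, so the single increment triggering the $r$-th crossing carries the $r$-th overshoot. If exactly $m\le k-1$ excursions complete by time $n$, then $\|S_n\|-\|S_{\tau_m}\|<1$ (no further crossing) and $\|S_{\tau_r}-S_{\tau_{r-1}}\|<1+Y_{\tau_r}$ for each $r$; summing gives $\|S_n\|<(m+1)+\sum_{r=1}^m Y_{\tau_r}\le k+(Y_{(n)}+\cdots+Y_{(n-k+2)})$, since the $m\le k-1$ distinct triggering increments are bounded by the top $k-1$ order statistics. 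Contrapositively, $\{U_n\ge k+Y_{(n)}+\cdots+Y_{(n-k+2)}\}\subseteq\{\tau_k\le n\}=\{N\ge k\}$, with $N$ the number of completed excursions; and $\{N=0\}=\{U_n<1\}$ gives $\bp{N=0}=1-\lambda$, the product constraint of the core. For the symmetric case I would first apply a Lévy-type reflection/symmetrization to pass from the two-sided maximum $U_n$ to this one-sided crossing count, which inflates the bound by a factor $2$ per additional level and produces the $2^{k-1}$ of \eqref{Ekn2}.

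\emph{Main obstacle.} The delicate point is step~(ii). The excursion count $N$ is a sum of $0/1$ ``excursion indicators'', but these are defined sequentially through the strong Markov/independence-of-future-increments property and are therefore \emph{not} independent; the naive iteration $\bp{\tau_k\le n}\le \lambda^k$ misses the crucial $1/k!$. The heart of the argument is thus to dominate $\mathbb{E}\binom{N}{k}$ by the $e_k$ of an independent surrogate family $A_1,\dots,A_n$ with the same all-fail probability $\prod_i(1-p_i)=\bp{U_n<1}=1-\lambda$, while preserving the containment $\{U_n\ge k+\sum\text{top}\}\subseteq\{N\ge k\}$. Arranging this coupling so that it respects \emph{both} constraints simultaneously --- independence with the correct product, and the order-statistic bookkeeping for the overshoots --- is where the real work lies; the symmetric case then only adds the (standard) Lévy reflection on top.
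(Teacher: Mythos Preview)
The paper does not contain a proof of Theorem~\ref{Tkn}; it is quoted from \cite{KN} purely as context for Theorems~\ref{Thj} and~\ref{T1}, and the surrounding text only compares the \emph{conclusions}, not the arguments. So there is no in-paper proof to benchmark your attempt against.

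On its own merits, your sketch is a well-organized plan but, as you yourself say, not a proof. The combinatorial core is correct and does produce exactly the right-hand side of \eqref{Ekn1}: for independent Bernoullis, $\mathbf 1_{N\ge k}\le\binom{N}{k}$, $\mathbb E\binom{N}{k}=e_k(p)\le\frac{1}{k!}(\sum_i p_i)^k$, and AM--GM on $\{1-p_i\}$ with $\prod_i(1-p_i)=1-\lambda$ gives $\sum_i p_i\le n(1-\sqrt[n]{1-\lambda})$. Your excursion bookkeeping in the non-negative case is also sound: if only $m\le k-1$ crossings occur then $S_n<(m+1)+\sum_{r\le m}Y_{\tau_r}\le k+\sum_{j=n-k+2}^{n}Y_{(j)}$, so the target event forces $N\ge k$.

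The gap is precisely where you locate it, and it is genuine. Your crossing count $N$ is a sum of \emph{adapted}, not independent, indicators, and there is no evident family $\{A_i\}_{i\le n}$ of independent events with both $\{N\ge k\}\subseteq\{\sum_i\mathbf 1_{A_i}\ge k\}$ and $\prod_i(1-p_i)=1-\lambda$. In the non-negative case $\{U_n<1\}=\{\sum_i X_i<1\}$ simply does not factor over $i$, so the product constraint your AM--GM step needs is not available for free. Without it you are left with the naive iteration $\bp{\tau_k\le n}\le\lambda^k$, which misses the $1/k!$. Bridging this---manufacturing the independence (or an adequate surrogate) while retaining the overshoot accounting---is the actual content of \cite{KN}; your proposal reduces to that step but does not supply it, and the symmetric case \eqref{Ekn2} inherits the same gap before the L\'evy reflection can even be invoked.
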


Note that the left-hand expression in~\eqref{Ekn1} is bounded above by
the one in~\eqref{Ekn2}. These results relate to Theorems~\ref{Thj}
and~\ref{T1} with $t = s = 1$ -- even when the $X_j$ need not be
non-negative or symmetric -- as follows:
\begin{align*}
&\ \bp{ U_n > 2kt + (k-1)s }\\
\leq &\ \bp{ U_n > kt + (k-1)s }\\
\leq &\ \bp{ U_n > k + Y_{(n)} + \cdots + Y_{(n-k+2)} } + \bp{ Y_{(n)} +
\cdots + Y_{(n-k+2)} > (k-1) }\\
\leq &\ \frac{2^{k-1}}{k!} [ n (1 - \sqrt[n]{1-\lambda}) ]^k
+ \bp{ Y_{(n)} + \cdots + Y_{(n-k+2)} > (k-1) },
\end{align*}
where the final inequality uses~\eqref{Ekn2}. Now if the upper bound in
the final inequality was missing the factor of $2^{k-1}$, then we
\textbf{claim} that this bound is at most $\frac{1}{k!}
(\lambda/(1-\lambda))^k$, which is the precise factor in
Theorem~\ref{Thm} and hence in a suitable specialization of
Theorem~\ref{Thj}. Thus, Theorem~\ref{Tkn} would imply a very similar
result to Theorem~\ref{Thm} (strengthened to replace $\bp{ M_n > 1 }$ by
$\bp{ Y_{(n)} + \cdots + Y_{(n-k+2)} > (k-1) }$, as in \cite{HM} as well
as in Theorem~\ref{Thj}).

\begin{remark}
Here we quickly explain why the result obtained in the preceding
paragraph would be similar to Theorem~\ref{Thm} but not exactly
comparable. The preceding claim -- that the bound in~\eqref{Ekn1} is
lower than the bound in Theorem~\ref{Thm} -- is easily checked:
\[
\frac{1}{k!} [ n (1 - \sqrt[n]{1-\lambda}) ]^k \leq \frac{1}{k!} \left(
\frac{\lambda}{1-\lambda} \right)^k, \qquad \forall k \geq 1.
\]
This is because the inequality turns out to be equivalent to requiring $1
- \alpha \lambda \leq (1-\lambda)^\alpha$ for $\lambda \in [0,1)$ and
$\alpha = 1 + \frac{1}{n} \in [1,2]$, and this latter inequality holds by
the binomial series formula. However, the same inequality does not hold
when working with the bound in~\eqref{Ekn2} instead:
\[
\frac{2^{k-1}}{k!} [ n (1 - \sqrt[n]{1-\lambda}) ]^k \not\leq \frac{1}{k!}
\left( \frac{\lambda}{1-\lambda} \right)^k, \qquad \forall k > 1.
\]
Indeed, this non-inequality $\not\leq$ is equivalent to $>$, i.e., to the
assertion that
\[
2^{1 - 1/k} n (1-\lambda)(1 - \sqrt[n]{1-\lambda}) > \lambda,
\]
which does hold at very small values of $\lambda \in (0,1)$, since the
left-hand side is a power series in $\lambda$ with constant term zero and
linear term $2^{1-1/k} > 1$ (for $k>1$).
\end{remark}

\subsection{The proofs}

Before showing Theorem~\ref{T1}, let us prove that it implies
Theorem~\ref{Thj}: this is because every metric semigroup embeds into a
bi-invariant metric monoid, by the following result.

\begin{prop}[{\cite[\S 2.1]{KR3}}]\label{Psubset}
Every bi-invariant metric semigroup $\G$ is contained in a metric monoid.
More precisely, $\G$ contains at most one idempotent, which is
automatically a two-sided identity. Thus if $\G$ contains exactly one
idempotent then it is a metric monoid. Otherwise, $\G$ is the set of
non-identity elements in a metric monoid $\G_\circ$ (with identity $e$;
thus the smallest $\G_\circ \supseteq \G$ is unique), and $\G$ maps
isometrically into $\G_\circ$ with:
$d_{\G_\circ}(e,g) := d_\G(g,g^2)$ for all $g \in \G$.
\end{prop}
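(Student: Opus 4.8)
The plan is to first exploit the \emph{strong} left/right-invariance that bi-invariance forces, namely that $d_\G(a,ab) = d_\G(b,b^2) = d_\G(a,ba)$ for all $a,b \in \G$; this is a one-line consequence of~\eqref{Ebiinv} (right-invariance gives $d_\G(a,ab) = d_\G(ab,ab^2)$, then left-invariance gives $d_\G(ab,ab^2) = d_\G(b,b^2)$, and the other identity is symmetric), and then to treat the two cases --- $\G$ has an idempotent, or it does not --- separately. For idempotents: if $f = f^2 \in \G$, then for every $a \in \G$ the strong identities give $d_\G(a,fa) = d_\G(f,f^2) = 0$ and $d_\G(a,af) = d_\G(f,f^2) = 0$, so $fa = af = a$; hence $f$ is a two-sided identity, and since a semigroup has at most one such, $\G$ has at most one idempotent and it is a two-sided identity. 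This already disposes of the first assertions, and if $\G$ does contain an idempotent $e$ then $(\G,\cdot,e,d_\G)$ is itself a metric monoid and there is nothing left to do.

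So assume $\G$ has no idempotent. I would set $\G_\circ := \G \sqcup \{e\}$, declaring $e$ a formal two-sided identity with $e^2 = e$ and keeping the product of $\G$ (associativity being a routine case-check), and extend $d_\G$ to a function $\rho$ on $\G_\circ$ by $\rho(e,e) := 0$ and $\rho(e,g) = \rho(g,e) := d_\G(g,g^2)$ for $g \in \G$. Note that the value $d_\G(g,g^2)$ is not a free choice: strong left-invariance in \emph{any} bi-invariant metric monoid extending $\G$ forces $d(e,g) = d(g,g^2)$, so this is the only candidate. Since $\G$ has no idempotent we have $d_\G(g,g^2) > 0$, so $\rho$ separates points, and it is plainly symmetric.

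The crux is then to verify that $\rho$ is a bi-invariant metric on $\G_\circ$, and everything reduces to the strong invariance identities above. For bi-invariance, translation by $e$ is trivial, and for $c \in \G$ one has $\rho(ce,cg) = d_\G(c,cg) = d_\G(g,g^2) = \rho(e,g)$ and symmetrically on the right, with the all-in-$\G$ case being~\eqref{Ebiinv}. For the triangle inequality the only genuinely new cases have $e$ among the three points, and each collapses to the triangle inequality for $d_\G$ after rewriting distances to $e$: for $x,z \in \G$,
\[
d_\G(x,z) \leq d_\G(x,xz) + d_\G(xz,z) = d_\G(z,z^2) + d_\G(x,x^2) = \rho(e,x) + \rho(e,z),
\]
which handles $e$ in the middle, while $e$ at an end follows from $d_\G(z,z^2) = d_\G(y,yz) \leq d_\G(y,y^2) + d_\G(y^2,yz) = \rho(e,y) + d_\G(y,z)$. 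This yields a metric monoid $(\G_\circ,\cdot,e,\rho)$ containing $\G$ isometrically and homomorphically with $\G = \G_\circ \setminus \{e\}$ and $\rho(e,g) = d_\G(g,g^2)$. For minimality and uniqueness I would argue: if $\G$ embeds isometrically and homomorphically into any metric monoid $(\G',e')$, then $e'$ is an idempotent, hence $e' \notin \G$, so $\G \cup \{e'\}$ is a submonoid of $\G'$, and strong left-invariance of $d_{\G'}$ forces $d_{\G'}(e',g) = d_{\G'}(g,g^2) = d_\G(g,g^2)$; thus $\G \cup \{e'\}$ is isomorphic as a metric monoid to $\G_\circ$, so the smallest metric monoid containing $\G$ exists and is unique.

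The hard part is the verification in the previous paragraph: there is exactly one degree of freedom in extending the metric, its value is forced by the isometry/minimality requirement, and one must confirm that this single value is simultaneously compatible with the triangle inequality and with both left- and right-invariance on all of $\G_\circ$. Each of these checks comes down to the strong left/right-invariance of $d_\G$ --- the very property that can fail for a general left- or right-invariant metric semigroup, which is precisely why the rest of the paper must impose it as a hypothesis rather than derive it.
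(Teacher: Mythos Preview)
Your proof is correct and follows essentially the same approach as the paper. The paper actually omits the proof of this proposition (deferring to~\cite{KR3}), but your argument mirrors exactly what the paper does for the analogous Theorem~\ref{Tembed}: you first derive the strong invariance identities (which the paper records as~\eqref{Einv}), use them to show that any idempotent is a two-sided identity, then adjoin a formal identity with the forced metric $d(e,g) = d_\G(g,g^2)$ and verify the triangle inequality and bi-invariance case by case via those same identities.
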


The proof is not hard (but omitted here; see~\cite{KR3}). Note that the
final statement about the extended metric being bi-invariant uses the
following calculation in any metric semigroup:
\begin{equation}\label{Einv}
d_\G(a,ba) = d_\G(ba, b^2 a) = d_\G(b, b^2) = d_\G(ab, a b^2) = d_\G(a,
ab), \qquad \forall a,b \in \G.
\end{equation}
Of course, the equalities in~\eqref{Einv} need not all hold in every
left/right-invariant metric monoid (such as in non-compact non-abelian
Lie groups $G$, which possess left-invariant but not necessarily
bi-invariant Riemannian metrics\footnote{For instance, by~\cite{Mil}, the
only connected Lie groups that admit a bi-invariant Riemannian metric are
of the form $K \times \mathbb{R}^n$, for $K$ a compact group and $n \geq
0$ an integer.}).

The other proof is that of our main result in this section; this is now
provided and includes fixing a small typo in \cite{KR1} itself.

\begin{proof}[Proof of Theorem~\ref{T1}]
The idea is to closely follow the proof of Theorem~\ref{Thj}, making the
necessary adjustments along the way because now $d_\G$ is only strongly
left-invariant. The first change is that the assertion that
$Y_j := d_\G(z_0, z_0 X_j)$,
$M_j(\omega) := \max_{1 \leq i \leq j} Y_j$
do not depend on $z_0$,
which was shown in~\cite{KR1} using~\eqref{Einv} via bi-invariance,
now follows instead from the strong left-invariance of $d_\G$.
The next change involves avoiding the use of~\eqref{Einv} in Step~2 of
the proof, in the long calculation on pp.~4106 of \cite{KR1}, where we
computed:
\begin{align*}
d_\G(z_0 S_{m-1}(\omega), z_0 S_m(\omega)) =
d_\G(z_0 S_{m-1}(\omega), z_0 S_{m-1}(\omega) \cdot X_m(\omega))
= &\ d_\G(z_0, z_0 X_m(\omega))\\
= &\ Y_m(\omega).
\end{align*}

\noindent Instead, this calculation now holds using merely the strong
left-invariance of $d_\G$.

A small remark here concerns an argument in \cite[Proof of Theorem~1]{HM}
over Banach spaces, that was black-boxed in~\cite{KR1}:
\[
\sup_{j,k \leq n} \| S_k - S_j \| \leq 2 \sup_j \| S_j \| = 2 U_n.
\]
This was used in~\cite{KR1} for bi-invariant metric semigroups; as we now
note, one requires only the (not even strong) left-invariance of $d_\G$,
and not an identity in $\G$, since by the triangle inequality,
\[
\sup_{j,k \leq n} d_\G(S_k, S_j) = \max_{1 \leq j,k \leq n} d_\G(z_0 S_k,
z_0 S_j) \leq 2 \max_{1 \leq j \leq n} d_\G(z_1, z_0 S_j) = 2 U_n. 
\]

The final point here is a modification that in fact applies to the
\textit{original} proof itself in~\cite{KR1} -- we fix a small typo
there. Namely, in \cite[Equation~(9)]{KR1}, the definition of
$p_{\beta,t}$ should be modified to use $0 < j$ instead of $0 \leq j$.
Thus we need to define and work with
\begin{equation}
p_{\beta,t} := \bp{d_\G(z_1, z_0 S_\beta) > t \geq
d_\G(z_1, z_0 S_j)\ \forall 0 < j < \beta}, \qquad 0 < \beta \leq n.
\end{equation}
\end{proof}

\begin{remark}
We leave it to the interested reader to work out the case of strongly
right-invariant metric semigroups, for the results in this section and
the next. For this, when working only over semigroups, the results and
proofs involve the modified quantities
\[
S_j(\omega) := X_j(\omega) X_{j-1}(\omega) \cdots X_1(\omega), \qquad
U_n := \max_{1 \leq j \leq n}d_\G(z_1, S_j z_0), \qquad Y_j := d_\G(z_0,
X_j z_0).
\]
However, if $\G$ is in fact a group then one does not need to prove the
``strongly right-invariant'' analogues (of all results in this paper)
separately, since it follows from the left-invariant results in
Theorem~\ref{T1}. This is because of the bijection between the sets of left- and
right-invariant metrics on a group $G$:
$d_G^L(a,b) \ \longleftrightarrow \ d_G^R(a^{-1}, b^{-1})$.
The same holds if $G$ is abelian, since any left/right-invariant metric
$d_G$ is automatically bi-invariant, and the results in this work reduce
to those in previous works \cite{KR1,KR2,KR3}.
\end{remark}

\section{Decreasing rearrangements and applications}

Here we write down a couple of applications of the Hoffmann-J{\o}rgensen
inequality. The first shows that controlling the behavior of the
independent variables $X_j$ is the same as controlling $S_n$ or $U_n$ --
but now over all strongly left/right-invariant semigroups (equivalently
by Theorem~\ref{Tembed}, over left/right-invariant metric monoids):

\begin{theorem}\label{Thj2}
Suppose $A \subset \mathbb{Z}_{>0}$ is either $\mathbb{Z}_{>0}$ or $\{ 1,
\dots, N \}$ for some $N \in \mathbb{Z}_{>0}$. Suppose $(\G, d_\G)$ is a
separable strongly left-invariant metric semigroup, $z_0, z_1 \in \G$,
and the variables $X_n \in L^0(\Omega,\G)$ are independent for all $n \in
A$. If $\sup_{n \in A} d_\G(z_1, z_0 S_n) < \infty$ almost surely, then
for all $p \in (0,\infty)$,
\[
\mathbb{E}_\mu \left[ \sup_{n \in A} d_\G(z_0, z_0 X_n)^p \right] <
\infty \quad \Longleftrightarrow \quad \mathbb{E}_\mu \left[ \sup_{n \in
A} d_\G(z_1, z_0 S_n)^p \right] < \infty.
\]
\end{theorem}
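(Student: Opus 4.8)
The plan is to follow the classical Banach-space template for this integrability equivalence (cf.\ Hoffmann-J\o rgensen~\cite{HJ} and~\cite[Chapter 6]{LT}), substituting Theorem~\ref{T1} for the usual Hoffmann-J\o rgensen inequality and invoking strong left-invariance wherever the Banach structure was used. Throughout I would write $U := \sup_{n \in A} d_\G(z_1, z_0 S_n)$, $U_N := \max_{1 \leq n \leq N} d_\G(z_1, z_0 S_n)$, $Y_n := d_\G(z_0, z_0 X_n)$, and $M_N := \max_{1 \leq n \leq N} Y_n$, so that the hypothesis reads $U < \infty$ a.s.\ and the claim is $\mathbb{E}_\mu[U^p] < \infty \iff \mathbb{E}_\mu[(\sup_{n \in A} Y_n)^p] < \infty$.

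The reverse implication ``$\Longleftarrow$'' is the easy half and does not even use the a.s.\ finiteness hypothesis. For $n \geq 2$ one has $z_0 S_n = (z_0 S_{n-1}) X_n$, so strong left-invariance gives $d_\G(z_0 S_{n-1}, z_0 S_n) = d_\G(z_0, z_0 X_n) = Y_n$; hence by the triangle inequality $Y_n \leq d_\G(z_1, z_0 S_{n-1}) + d_\G(z_1, z_0 S_n) \leq 2U$, while $Y_1 = d_\G(z_0, z_0 S_1) \leq d_\G(z_0, z_1) + U$. Thus $\sup_{n \in A} Y_n \leq d_\G(z_0, z_1) + 2U$, and the implication follows from the elementary bound $(a+b)^p \leq \max(1, 2^{p-1})(a^p + b^p)$.

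For the forward implication ``$\Longrightarrow$'', the same identity $d_\G(z_0 S_{n-1}, z_0 S_n) = Y_n$ (now valid by strong left-invariance rather than bi-invariance) lets me invoke Theorem~\ref{T1} with $z_0,z_1$ as given, $k=2$, $n_1=n_2=1$, $t_1=t_2=s=t$ (so $I_0 = \{1,2\}$ and $\sum n_i = 2 \leq N+1$), yielding the self-improving tail bound
\begin{equation*}
\bp{U_N > 4t} \leq \bp{U_N > t}^2 + \bp{M_N > t}, \qquad \forall\, t \in (0,\infty),\ N \geq 1.
\end{equation*}
Since $U_N \uparrow U$ with $U < \infty$ a.s., monotone convergence reduces the statement to producing a bound on $\mathbb{E}_\mu[U_N^p]$ that is uniform in $N$. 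Using the layer-cake formula $\mathbb{E}_\mu[Z^p] = p \int_0^\infty t^{p-1} \bp{Z > t}\, dt$, I would first fix a threshold $t_0$ with $\bp{U > t_0} < \tfrac12 4^{-p}$ --- possible precisely because $U < \infty$ a.s.\ (this is the one place the hypothesis enters). Then for $t \geq t_0$ one has $\bp{U_N > t}^2 \leq \bp{U > t_0}\,\bp{U_N > t}$, and $\bp{M_N > t} \leq \bp{\sup_{n\in A} Y_n > t} =: G(t)$ with $\int_{t_0}^\infty t^{p-1} G(t)\, dt \leq \tfrac1p \mathbb{E}_\mu[(\sup_{n\in A} Y_n)^p] < \infty$. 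Integrating the displayed inequality against $p\,t^{p-1}$ over $[t_0, R]$ and substituting $u = 4t$ on the left turns it into an inequality of the form $(4^{-p} - \bp{U > t_0})\, I_{N,R} \leq (\text{a constant independent of } N, R)$, where $I_{N,R} := \int_{t_0}^R t^{p-1}\bp{U_N > t}\, dt$; the coefficient $4^{-p} - \bp{U > t_0}$ is bounded below by $\tfrac12 4^{-p} > 0$ by the choice of $t_0$. Letting $R \to \infty$ bounds $\int_{t_0}^\infty t^{p-1}\bp{U_N > t}\, dt$, hence $\mathbb{E}_\mu[U_N^p] \leq t_0^p + p I_{N,\infty}$, uniformly in $N$, and monotone convergence finishes the proof (in the case $A = \{1,\dots,N\}$ the last step is trivial).

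The main obstacle is this bootstrapping step. One must be careful that (i) the integral $I_{N,R}$ is \emph{a priori} finite, which is why I truncate at $R$ before knowing $\mathbb{E}_\mu[U_N^p] < \infty$, so that the subtraction used to isolate $I_{N,R}$ after the substitution is legitimate; and (ii) the absorbing coefficient $4^{-p} - \bp{U > t_0}$ is strictly positive with a gap that is uniform in $N$, which is exactly what the choice of $t_0$ (and hence the a.s.\ finiteness of $U$) secures. Everything else --- the two uses of strong left-invariance, the triangle-inequality estimates, and the layer-cake computations --- is routine.
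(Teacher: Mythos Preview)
Your proposal is correct and follows essentially the same route as the paper. The only cosmetic difference is that the paper packages your layer-cake bootstrap as a standalone estimate (Proposition~\ref{Plt}, phrased via the decreasing rearrangement $U_n^*$), whereas you inline the same computation; in both cases the a.s.\ finiteness of $U$ is used exactly once, to find a threshold $t_0$ at which the absorbing coefficient is uniformly positive, and strong left-invariance enters only through the identity $d_\G(z_0 S_{n-1}, z_0 S_n) = Y_n$.
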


This extends \cite[Theorem~A]{KR2} and originally \cite[Theorem~3.1]{HJ}
to the primitive setting of strongly left-invariant metric semigroups.
The proof of this and the next few results use the theory of quantile
functions / decreasing rearrangements:

\begin{definition}
Say $(\G, d_\G)$ is a strongly left-invariant metric semigroup, and $X :
(\Omega, \mathscr{A}, \mu) \to (\G,\mathscr{B}_\G)$. The
\textit{decreasing} (or \textit{non-increasing}) \textit{rearrangement}
of $X$ is the right-continuous inverse $X^*$ of the function $t \mapsto
\bp{d_\G(z_0, z_0 X) > t}$, for any $z_0 \in \G$. In other words, $X^*$
is the real-valued random variable defined on $[0,1]$ with the Lebesgue
measure, as follows:
\[
X^*(t) := \sup \{ y \in [0,\infty) : \bp{d_\G(z_0, z_0 X) > y} > t \}.
\]
\end{definition}

Now the proof of Theorem~\ref{Thj2} uses the first assertion in
\cite[Proposition~6.8]{LT}, extended here from Banach spaces to strongly
left-invariant metric semigroups. It shows that controlling sums of
$\G$-valued $L^p$ random variables in probability (i.e., in $L^0$) allows
us to control these sums in $L^p$ as well, for $p>0$.

\begin{prop}\label{Plt}
Suppose $(\G, d_\G)$ is a separable strongly left-invariant metric
semigroup, $p \in (0,\infty)$, and we have independent random variables
$X_1, \dots, X_n$ $\in L^p(\Omega,\G)$, i.e., $\mathbb{E}_\mu[ d_\G(z_0,
z_0 X_j)^p ] < \infty$ for all $j$ (and any choice of $z_0 \in \G$). Now
fix $z_0, z_1 \in \G$ and let $S_k, U_n, M_n$ be as in~\eqref{ESMX} and
Theorems~\ref{Thj}, \ref{T1}. Then,
\[
\mathbb{E}_\mu[U_n^p] \leq 2^{1 + 2p} (\mathbb{E}_\mu[M_n^p] +
U_n^*(2^{-1-2p})^p).
\]
\end{prop}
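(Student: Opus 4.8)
The plan is to transplant the argument behind \cite[Proposition~6.8]{LT}, now using the Hoffmann--J{\o}rgensen inequality in the form supplied by Theorem~\ref{T1}. Apart from the Banach-space bookkeeping, the two points needing attention are that $U_n \in L^p(\Omega)$ and the (strongly left-invariant) telescoping estimate that yields this.

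First I would verify $\mathbb{E}_\mu[U_n^p] < \infty$. By the triangle inequality together with the computation
\[
d_\G(z_0 S_{i-1}, z_0 S_i) = d_\G(z_0 S_{i-1}, z_0 S_{i-1} X_i) = d_\G(z_0, z_0 X_i) = Y_i,
\]
valid by strong left-invariance exactly as in the proof of Theorem~\ref{T1}, one gets $d_\G(z_1, z_0 S_j) \leq d_\G(z_1, z_0) + \sum_{i=1}^j Y_i$ for each $j$, hence $U_n \leq d_\G(z_1, z_0) + \sum_{i=1}^n Y_i$. Since each $Y_i \in L^p$ by hypothesis, and a finite sum of a constant and of $L^p$ random variables again lies in $L^p$ — using subadditivity of $\|\cdot\|_{L^p}$ when $p \geq 1$ and the elementary bound $\big(\sum a_i\big)^p \leq \sum a_i^p$ when $p \leq 1$ — we conclude $\mathbb{E}_\mu[U_n^p] < \infty$. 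This finiteness is what will license the absorption step.

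Next I would specialize Theorem~\ref{T1} (hence Theorem~\ref{Thj}) with $k = 2$, $n_1 = n_2 = 1$, and $t_1 = t_2 = s = t$: one checks $I_0 = \{1,2\}$, so $\mathbf{1}_{1 \notin I_0} = 0$ and the inequality reads
\[
\bp{U_n > 4t} \leq \bp{U_n > t}^2 + \bp{M_n > t}, \qquad \forall t \in [0,\infty),
\]
legitimately since $n_1 + n_2 = 2 \leq n+1$. Put $t_0 := U_n^*(2^{-1-2p})$, the decreasing rearrangement of the real-valued $U_n$; then $t_0 < \infty$ (because $U_n \in L^p$ forces $\bp{U_n > y} \to 0$) and $\bp{U_n > t} \leq 2^{-1-2p}$ for all $t \geq t_0$. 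Writing $\mathbb{E}_\mu[U_n^p] = \int_0^\infty p u^{p-1}\bp{U_n > u}\,du$ and substituting $u = 4t$,
\[
\mathbb{E}_\mu[U_n^p] = 4^p \int_0^\infty p t^{p-1}\bp{U_n > 4t}\,dt \leq 4^p\left( \int_0^{t_0} p t^{p-1}\,dt + \int_{t_0}^\infty p t^{p-1}\bigl(\bp{U_n > t}^2 + \bp{M_n > t}\bigr)\,dt \right),
\]
where the first integral equals $t_0^p$; in the second, $\bp{U_n > t}^2 \leq 2^{-1-2p}\bp{U_n > t}$ for $t \geq t_0$, so after enlarging both remaining integrals to $[0,\infty)$ the right-hand side is at most $4^p\bigl(t_0^p + 2^{-1-2p}\mathbb{E}_\mu[U_n^p] + \mathbb{E}_\mu[M_n^p]\bigr)$. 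Since $4^p \cdot 2^{-1-2p} = \tfrac12$ and $\mathbb{E}_\mu[U_n^p] < \infty$, transposing the middle term and multiplying by $2$ yields $\mathbb{E}_\mu[U_n^p] \leq 2^{1+2p}\bigl(\mathbb{E}_\mu[M_n^p] + t_0^p\bigr)$, which is exactly the claim.

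The only genuinely non-routine step is the $L^p$-bound on $U_n$ in the second paragraph: over a Banach space this is immediate, but over a semigroup without identity one must run the telescoping through strong left-invariance to even make sense of the estimate controlling $U_n$ by $\sum_i Y_i$. Everything after that is the classical self-improvement/absorption argument, and the constants close up exactly for the choice $s = t$.
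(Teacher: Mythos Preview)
Your proof is correct and follows essentially the same route as the paper's: the telescoping bound $U_n \leq d_\G(z_1,z_0) + \sum_i Y_i$ via strong left-invariance to get $U_n \in L^p$, the same specialization of Theorem~\ref{T1} with $k=2$, $n_1=n_2=1$, $t_1=t_2=s=t$, and the same absorption argument after splitting the tail integral. The only cosmetic difference is that the paper splits at an arbitrary $u > U_n^*(2^{-1-2p})$ and lets $u \downarrow t_0$, whereas you split directly at $t_0$ (which is fine by right-continuity of $t \mapsto \bp{U_n > t}$).
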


As this result was not proved in \cite{LT}, and given that we work here
in significantly greater generality, we write down a proof for
completeness.

\begin{proof}
The proof uses the following fact (which follows from the Pigeonhole
principle):
\begin{equation}\label{Epigeon}
0 \leq a \leq \sum_{j=1}^k w_j a_j,\ 0 \leq a_j, w_j\ \forall j \quad
\implies \quad a^p \leq (\sum_j w_j)^p \sum_{j=1}^k a_j^p\ \ \forall p
\in (0, \infty). 
\end{equation}

Now to prove the result, note that $d_\G(z_1, z_0 S_k) \leq d_\G(z_1,
z_0) + \sum_{j=1}^k d_\G(z_0, z_0 X_j)$ for all $1 \leq k \leq n$, by the
triangle inequality and the strong left-invariance of $d_\G$.
Using~\eqref{Epigeon} with all $w_j = 1$,
\[
U_n^p \leq \left( d_\G(z_1, z_0) + \sum_{j=1}^n d_\G(z_0, z_0 X_j)
\right)^p \leq (n+1)^p \left( d_\G(z_1, z_0)^p + \sum_{j=1}^n d_\G(z_0,
z_0 X_j)^p \right).
\]

\noindent Hence $U_n \in L^p(\Omega,\G)$ if all $X_j \in L^p(\Omega,\G)$.
Now recall that if $Z : (\Omega, \mathscr{A}, \mu) \to [0,\infty)$, then
\begin{equation}\label{Edr}
\mathbb{E}_\mu[Z^\alpha] = \alpha \int_0^\infty t^{\alpha - 1} \bp{Z>t}\
dt, \qquad \forall \alpha > 0.
\end{equation}

\noindent Apply Equation~\eqref{Edr} with $Z = U_n$, $\alpha = p$, and $t
\leadsto 4t$. Thus the integral converges, and we compute for any fixed
$u \geq 0$ (using the Hoffmann-J{\o}rgensen inequality in
Theorem~\ref{T1} with $k=2, n_1 = n_2 = 1, t_1 = t_2 = s = t$, so that
$I_0 = \{ 1, 2 \}$):
\begin{align*}
\mathbb{E}_\mu[U_n^p] = &\ p \int_0^\infty (4t)^{p-1} \bp{U_n > 4t}\
d (4t) = p 4^p \int_0^\infty t^{p-1} \bp{U_n > 4t}\ dt\\
= &\ p 4^p \left( \int_0^u + \int_u^\infty \right) t^{p-1} \bp{U_n > 4t}\
dt\\
\leq &\ 4^p u^p + p 4^p \int_u^\infty t^{p-1} [\bp{U_n > t}^2 + \bp{M_n >
t}]\ dt\\
\leq &\ 4^p u^p + p 4^p \int_u^\infty t^{p-1} \bp{U_n > t}^2\ dt + 4^p
\mathbb{E}_\mu[M_n^p],
\end{align*}

\noindent where the final inequality follows from~\eqref{Edr} with $Z =
M_n \in L^p(\Omega,\mathbb{R})$, $\alpha = p$. Now suppose $u >
U_n^*(2^{-1-2p})$; then the outstanding integrand can be bounded above
via
\[
\bp{U_n > t}^2 \leq \bp{U_n > u} \bp{U_n > t} \leq 2^{-1-2p} \bp{U_n >
t}.
\]

\noindent Continuing with the above calculations,
\[
\mathbb{E}_\mu[U_n^p] \leq (4u)^p + 4^p \mathbb{E}_\mu[M_n^p] + 4^p \cdot
2^{-1-2p} \cdot \int_0^\infty p t^{p-1} \bp{U_n > t}\ dt = (4u)^p +
4^p \mathbb{E}_\mu[M_n^p] + 2^{-1} \mathbb{E}_\mu[U_n^p],
\]

\noindent by a third application of~\eqref{Edr} with $Z = U_n$, $\alpha =
p$. Since this inequality holds for all $u > U_n^*(2^{-1-2p})$, the
desired claim follows.
\end{proof}

With Proposition~\ref{Plt} at hand, one shows Theorem~\ref{Thj2} by
closely following the proof of \cite[Theorem~A]{KR2}. Here we only point
out the modifications required: in addition to using
Proposition~\ref{Plt} over strongly left-invariant metric semigroups, the
only other update is that in showing that
\[
d_\G(z_0, z_0 X_n) \leqslant d_\G(z_1, z_0 S_{n-1}) + d_\G(z_1, z_0 S_n),
\]
we now use the strong left-invariance rather than the bi-invariance of
the metric $d_\G$. \qed\medskip

The other result that we discuss and extend here (from bi-invariant to
strongly left-invariant metric semigroups), again uses the
Hoffmann-J{\o}rgensen inequality to relate the $L^p$-norm of $U_n$ to its
tail distribution (using $U_n^*$).

\begin{prop}\label{Lbounds}
There exists a universal positive constant $c_1$ such that for any $0
\leq t \leq s \leq 1/2$, any separable strongly left-invariant metric
semigroup $(\G, d_\G)$ with elements $z_0, z_1$, and any sequence of
independent $\G$-valued random variables $X_1, \dots, X_n$,
\[
U_n^*(t) \leq c_1 \frac{\log(1/t)}{\max \{ \log(1/s), \log \log(4/t)
\}} ( U_n^*(s) + M_n^*(t/2)),
\]

\noindent with $U_n := \max_{1 \leqslant j \leqslant n} d_\G(z_1, z_0
S_j)$ and $M_n := \max_{1 \leq j \leq n} d_\G(z_0, z_0 X_j)$ as above.
\end{prop}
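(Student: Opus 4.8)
The plan is to translate the Hoffmann--J{\o}rgensen inequality of Theorem~\ref{T1} into an estimate on the decreasing rearrangements $U_n^*$ and $M_n^*$, then optimise the integer parameter, and finish with a Stirling-type computation. Two harmless normalisations come first. We may assume $t>0$, since for $t=0$ the factor $\log(1/t)/\max\{\log(1/s),\log\log(4/t)\}$ is $+\infty$ and there is nothing to prove. And by Theorem~\ref{Tembed} we may assume $\G$ is a monoid with identity $e$: then $d_\G(z_0,z_0e)=0$ and the partial products $S_j$ stabilise once we append identities, so adjoining any number of copies of $e$ to the list $X_1,\dots,X_n$ leaves both $U_n^*$ and $M_n^*$ unchanged while enlarging $n$; this lets us treat $n$ as arbitrarily large, which will matter for the constraint $\sum n_i\le n+1$ in Theorem~\ref{T1}.

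Next I would invoke the Hoffmann--J{\o}rgensen inequality over $\G$ in the Hitczenko--Montgomery-Smith form of Theorem~\ref{Thm}, which holds over every strongly left-invariant metric semigroup as the case $k=1$, $n_1=K$ of Theorem~\ref{T1} (for either possibility of $I_0$, the right-hand side of~\eqref{Ehj} is dominated by the quotient in Theorem~\ref{Thm}), applied with $t_1:=U_n^*(s)$ and with the scalar denoted $s$ there equal to $M_n^*(t/2)$; here $K\le n+1$ after the padding. By the definition of the decreasing rearrangement, $\bp{U_n>U_n^*(s)}\le s$ and $\bp{M_n>M_n^*(t/2)}\le t/2$, and since $s\le 1/2$ we get $\bp{U_n>t_1}/\bp{U_n\le t_1}\le s/(1-s)\le 2s$. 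Thus
\[
\bp{U_n>(2K-1)U_n^*(s)+(K-1)M_n^*(t/2)}\le \frac{(2s)^K}{K!}+\frac t2 .
\]
Choosing $K=K_0:=$ the least positive integer with $(2s)^K/K!\le t/2$ (which exists since $2s\le 1$) makes the right-hand side $\le t$; as $\bp{U_n>a}\le t$ forces $U_n^*(t)\le a$, we conclude
\[
U_n^*(t)\le (2K_0-1)\bigl(U_n^*(s)+M_n^*(t/2)\bigr)\le 2K_0\bigl(U_n^*(s)+M_n^*(t/2)\bigr).
\]

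The remaining task -- and the real obstacle -- is the purely numerical claim that $2K_0\le c_1\log(1/t)/\max\{\log(1/s),\log\log(4/t)\}$ for a universal $c_1$. By minimality of $K_0$, the integer $m:=K_0-1\ge 1$ satisfies $(2s)^m/m!>t/2$. Discarding $1/m!\le 1$ gives $(2s)^m>t/2$, i.e.\ $m\log(1/(2s))<\log(2/t)$; discarding $(2s)^m\le 1$ gives $m!<2/t$, whence $m\log m-m<\log(2/t)$ via $\log(m!)\ge m\log m-m$, so $m\lesssim \log(1/t)/\log\log(1/t)$ for $t$ small. To match the stated denominator one reconciles these two bounds: when $s\le 1/4$ one has $\log(1/(2s))\ge\tfrac12\log(1/s)$, so the first estimate gives $m\lesssim\log(1/t)/\log(1/s)$; when $s\in(1/4,1/2]$ the quantity $\log(1/s)$ is bounded, so the second estimate suffices because then $\max\{\log(1/s),\log\log(4/t)\}\asymp\log\log(4/t)\asymp\log\log(1/t)$ once $t$ is small, while for $t$ in a bounded range $K_0$ is itself bounded and the factor is bounded below by a positive constant. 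Assembling these cases and enlarging $c_1$ as needed completes the proof; the main difficulty is confined to this last step -- handling the two regimes for $s$ (and the correspondingly small or moderate $t$) before absorbing everything into the universal constant.
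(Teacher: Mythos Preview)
Your proof is correct and follows essentially the same route the paper indicates (it defers to \cite[Lemma~3.1]{KR2} via \cite[Corollary~1]{HM}): apply the Hitczenko--Montgomery-Smith form of Hoffmann--J{\o}rgensen with $t_1=U_n^*(s)$ and $s=M_n^*(t/2)$, choose $K$ minimal so that $(2s)^K/K!\le t/2$, and finish with the Stirling-type estimate on $K$. Your padding-by-identities step (via Theorem~\ref{Tembed}) to remove the constraint $K\le n+1$ is a clean way to make the argument self-contained in the semigroup setting.
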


We again omit the proof, as it is the same as that of
\cite[Lemma~3.1]{KR2}, via the proof of \cite[Corollary~1]{HM}. \qed

\section{Additional probability inequalities and L\'evy's equivalence}

We next extend several other probability inequalities to strongly
left/right-invariant metric semigroups (in particular, they now become
applicable to all Lie groups), from their previous avatars over either
bi-invariant metric monoids, or in one case, Banach spaces. These
inequalities also help extend the following result:

\begin{theorem}[L\'evy's equivalence]\label{Tlevy}
Suppose $(\G, d_\G)$ is a complete separable strongly left-invariant
metric semigroup, $X_n : (\Omega, \mathscr{A}, \mu) \to (\G,
\mathscr{B}_\G)$ are independent, $X \in L^0(\Omega, \G)$, and $S_n$ is
as in \eqref{ESMX}. Then
\[
S_n \longrightarrow X\ a.s.~\mathbb{P}_\mu \quad \Longleftrightarrow
\quad S_n \conv{P} X.
\]

\noindent If instead the sequence $S_n$ does not converge in the above
manner, then it diverges almost surely.
\end{theorem}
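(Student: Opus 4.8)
The plan is to follow L\'evy's classical argument (the version that does not presume symmetry of the $X_n$), with the Banach-space increments $S_m^{-1}S_n$ replaced by the distances $d_\G(S_m,S_n)$ and with the Ottaviani--Skorohod inequality over strongly left-invariant metric semigroups (or equally the L\'evy--Ottaviani maximal inequality of this section) as the principal tool. The one structural fact that makes everything go through is this: for $m<n$ we have $S_n = S_m\cdot(X_{m+1}\cdots X_n)$, so strong left-invariance gives
\[
d_\G(S_m,S_n) = d_\G\big(S_m,\, S_m(X_{m+1}\cdots X_n)\big) = d_\G\big(y_0,\, y_0(X_{m+1}\cdots X_n)\big) \qquad \text{for any fixed } y_0\in\G,
\]
so that $d_\G(S_m,S_n)$ is a fixed measurable function of $(X_{m+1},\dots,X_n)$ alone, hence independent of $\sigma(X_1,\dots,X_m)$. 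Thus, metrically, the tail of the process $(S_n)$ behaves exactly like partial products of independent variables restarted afresh --- precisely what one needs in order to feed in a maximal inequality and, afterwards, a zero--one law. (Alternatively one could first pass to the left-invariant metric monoid of Theorem~\ref{Tembed}, but this is unnecessary.)

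The implication $S_n\to X$ a.s.\ $\Rightarrow S_n\conv{P}X$ is automatic, so assume $S_n\conv{P}X$. Then $(S_n)$ is Cauchy in probability (triangle inequality), so for each $\varepsilon>0$ there is $n_0=n_0(\varepsilon)$ with $\bp{d_\G(S_k,S_l)>\varepsilon}<\tfrac12$ for all $k,l\ge n_0$; since $\G$ is complete it suffices to prove that $(S_n)$ is a.s.\ Cauchy (the resulting a.s.\ limit $Y\in L^0(\Omega,\G)$ then equals $X$ a.s., an a.s.\ limit being a limit in probability). Fixing $\varepsilon>0$, $n\ge n_0(\varepsilon)$ and $N>n$, I would write each $d_\G(S_n,S_{n+j})$ as $d_\G(y_0,y_0X_{n+1}\cdots X_{n+j})$ and apply the Ottaviani--Skorohod inequality to $X_{n+1},\dots,X_N$ to obtain a bound of the form
\[
\bp{\max_{n\le j\le N}d_\G(S_n,S_j)>2\varepsilon}\ \le\ \frac{\bp{d_\G(S_n,S_N)>\varepsilon}}{\min_{n\le j\le N}\bp{d_\G(S_j,S_N)\le\varepsilon}}\ \le\ 2\,\bp{d_\G(S_n,S_N)>\varepsilon},
\]
the last step by $n\ge n_0$. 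Letting $N\to\infty$: the left-hand events increase to $\{\sup_{j\ge n}d_\G(S_n,S_j)>2\varepsilon\}$, while $\bp{d_\G(S_n,S_N)>\varepsilon}\le\bp{d_\G(S_n,X)>\varepsilon/2}+\bp{d_\G(S_N,X)>\varepsilon/2}$ with the second term vanishing, so $\bp{\sup_{j\ge n}d_\G(S_n,S_j)>2\varepsilon}\le 2\bp{d_\G(S_n,X)>\varepsilon/2}\to 0$ as $n\to\infty$. Since $W_n:=\sup_{k,l\ge n}d_\G(S_k,S_l)\le 2\sup_{j\ge n}d_\G(S_n,S_j)$ by the triangle inequality and $W_n\downarrow W_\infty$, one gets $\bp{W_\infty>4\varepsilon}\le\bp{W_n>4\varepsilon}\to0$ for every $\varepsilon>0$, whence $W_\infty=0$ a.s., i.e.\ $(S_n)$ is a.s.\ Cauchy; this proves the equivalence.

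For the dichotomy I would extract a zero--one law from the same structural fact. For $k,l\ge n$ the distance $d_\G(S_k,S_l)$ is $\sigma(X_{n+1},X_{n+2},\dots)$-measurable, hence so is $W_n$, and therefore $W_\infty=\inf_{n\ge m}W_n$ is $\sigma(X_{m+1},X_{m+2},\dots)$-measurable for every $m$; thus $W_\infty$ lies in the tail $\sigma$-field of the independent sequence $(X_n)$, and Kolmogorov's zero--one law gives $\bp{W_\infty=0}\in\{0,1\}$. In a complete space $\{W_\infty=0\}$ is exactly the event that $(S_n)$ converges. If $S_n$ fails to converge in the sense of the first part, then no $X\in L^0(\Omega,\G)$ satisfies $S_n\to X$ a.s.; were $\bp{W_\infty=0}=1$, the a.s.\ limit would be such an $X$, a contradiction. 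Hence $\bp{W_\infty=0}=0$ and $(S_n)$ diverges a.s.

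The only genuinely new input is the structural identity of the first paragraph, and this is exactly where strong left-invariance is indispensable: with neither an identity nor inverses available, it is this identity that exhibits $d_\G(S_m,S_n)$ (for $m<n$) as a function of $X_{m+1},\dots,X_n$ only, and hence legitimizes both the application of Ottaviani--Skorohod to the shifted process and the recognition of $\{(S_n)\text{ converges}\}$ as a tail event. Granting the Ottaviani--Skorohod inequality at this level of generality (established separately in this section), I expect the rest to be routine; the main obstacle is bookkeeping --- organizing the ``$m<n$'' increments so that the independence required by the maximal inequality and by the zero--one law is manifest.
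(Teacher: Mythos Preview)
Your proposal is correct and follows essentially the same approach as the paper. The paper in fact omits the proof entirely, merely remarking that it proceeds verbatim as in \cite[Theorem~2.1]{KR2} via the Ottaviani--Skorohod inequality, with the single modification that one must equate $d_\G(z_0 S_k, z_0 S_n)$ with $d_\G(z_0, z_0 X_{k+1}\cdots X_n)$ using strong left-invariance --- precisely the ``structural identity'' you isolate in your first paragraph and correctly flag as the only genuinely new input.
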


Via Proposition~\ref{Psubset}, this result simultaneously extends
\cite[Theorem~2.1]{KR2} for $\G$ a complete separable bi-invariant
metric semigroup -- which in turn had extended variants for $\G$ a
separable Banach space by
It$\hat{\mbox{o}}$--Nisio \cite[Theorem 3.1]{IN} and by
Hoffmann-J{\o}rgensen and Pisier \cite[Lemma 1.2]{HJP} --
as well as Tortrat's result in~\cite{To2} for $\G$ a complete separable
metric group.

\begin{remark}
We claim that the setting in Theorem~\ref{Tlevy} is strictly more general
than even \cite[Theorem~2.1]{KR2} -- which itself was more general than
the preceding variants. This follows by considering examples of Lie
groups with left-invariant but not bi-invariant Riemannian metric (by the
results in \cite{Mil}, as discussed above).
\end{remark}

The proof of Theorem~\ref{Tlevy} employs the Ottaviani--Skorohod
inequality:

\begin{prop}[Ottaviani--Skorohod inequality]\label{Pos}
Suppose $(\G, d_\G)$ is a separable strongly left-invariant metric
semigroup, and $X_1, \dots, X_n : \Omega \to \G$ are independent. Fix $0
< \alpha, \beta \in \mathbb{R}$. Then for all $z_0, z_1 \in \G$,
\[
\bp{\max_{1 \leq k \leq n} d_\G(z_1, z_0 S_k) \geq \alpha  +\beta}
\cdot \min_{1 \leq k \leq n} \bp{ d_\G(S_k, S_n) \leq \beta} \leq
\bp{ d_\G(z_1, z_0 S_n) \geq \alpha}.
\]
\end{prop}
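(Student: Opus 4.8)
The plan is to adapt the classical first-passage (stopping-index) proof of the Ottaviani--Skorohod inequality, replacing every appeal to bi-invariance of the metric by an appeal to strong left-invariance. Fix $z_0,z_1\in\G$ and $\alpha,\beta>0$, and with $S_k$ as in~\eqref{ESMX} I would introduce the first-passage index
\[
\tau(\omega):=\min\{\,1\le k\le n:\ d_\G(z_1,z_0 S_k(\omega))\ge\alpha+\beta\,\},
\]
with $\tau:=\infty$ when this set is empty. Since $\G$ is separable, each $d_\G(z_1,z_0 S_k)$ is a genuine random variable, the events $\{\tau=k\}$ for $1\le k\le n$ are disjoint and measurable, their union is precisely $\{\max_{1\le k\le n} d_\G(z_1,z_0 S_k)\ge\alpha+\beta\}$, and crucially $\{\tau=k\}\in\sigma(X_1,\dots,X_k)$.

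The step I expect to be the real obstacle is obtaining the right independence statement in the absence of an identity element. For $1\le k<n$ I would write $T_k(\omega):=X_{k+1}(\omega)\cdots X_n(\omega)$, so that $S_n=S_k\,T_k$, and then invoke strong left-invariance in the form $d_\G(a,ab)=d_\G(b,b^2)$ with $a=S_k$, $b=T_k$:
\[
d_\G(S_k,S_n)=d_\G(S_k,S_k T_k)=d_\G(T_k,T_k^2),
\]
which is a function of $X_{k+1},\dots,X_n$ alone (and for $k=n$, $d_\G(S_n,S_n)=0$ is constant). Hence $\{d_\G(S_k,S_n)\le\beta\}\in\sigma(X_{k+1},\dots,X_n)$, and by independence of $X_1,\dots,X_n$ this event is independent of $\{\tau=k\}$. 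I would stress that plain left-invariance does not close this gap: in a semigroup lacking an identity one cannot simplify $d_\G(S_k,S_k T_k)$ using only $d_\G(ca,cb)=d_\G(a,b)$, so the word ``strongly'' is used here exactly as advertised in the introduction, and this identity is the single genuinely new ingredient compared with the Banach-space and bi-invariant versions.

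To finish, I would assemble the pieces, which is routine. On $\{\tau=k\}\cap\{d_\G(S_k,S_n)\le\beta\}$, left-invariance gives $d_\G(z_0 S_n,z_0 S_k)=d_\G(S_n,S_k)\le\beta$, so by the triangle inequality
\[
\alpha+\beta\le d_\G(z_1,z_0 S_k)\le d_\G(z_1,z_0 S_n)+d_\G(z_0 S_n,z_0 S_k)\le d_\G(z_1,z_0 S_n)+\beta,
\]
i.e.\ $d_\G(z_1,z_0 S_n)\ge\alpha$ on that event. Summing over the disjoint decomposition and using the independence just established,
\[
\bp{d_\G(z_1,z_0 S_n)\ge\alpha}\ \ge\ \sum_{k=1}^n \bp{\tau=k}\,\bp{d_\G(S_k,S_n)\le\beta}\ \ge\ \Big(\min_{1\le k\le n}\bp{d_\G(S_k,S_n)\le\beta}\Big)\sum_{k=1}^n\bp{\tau=k},
\]
and since $\sum_{k=1}^n\bp{\tau=k}=\bp{\max_{1\le k\le n} d_\G(z_1,z_0 S_k)\ge\alpha+\beta}$, the inequality of Proposition~\ref{Pos} follows.
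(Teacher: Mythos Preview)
Your proof is correct and follows essentially the same approach as the paper: the paper omits the details but states that the classical first-passage argument goes through verbatim, with the single new step being the use of strong left-invariance to equate $d_\G(z_0 S_k, z_0 S_n)$ with $d_\G(z_0, z_0 X_{k+1}\cdots X_n)$ (equivalently, your $d_\G(S_k,S_n)=d_\G(T_k,T_k^2)$), which is exactly the independence workaround you isolate. The paper also notes that the result is the special case $m=1$, $a=\alpha+\beta$, $b=\beta$ of the Mogul'skii inequalities, but since those are proved by the same stopping-time scheme with the same workaround, your direct argument matches the intended one.
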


This is a special case of the Mogul'skii inequalities (setting $m=1, a =
\alpha + \beta, b = \beta$ in the next result) -- in~\cite{KR2} for
bi-invariant metric semigroups, but also more generally in the present
setting:

\begin{prop}[Mogul'skii inequalities]\label{Pmog}
Suppose $(\G, d_\G)$ is a separable strongly left-invariant metric
semigroup, $z_0, z_1 \in \G$, $a,b,c \in [0,\infty)$, and $X_1, \dots,
X_n \in L^0(\Omega,\G)$ are independent. If $1 \leq m \leq n$ in
$\mathbb{Z}$, then:
\begin{align*}
& \bp{\min_{m \leq k \leq n} d_\G(z_1, z_0 S_k) \leq a} \cdot \min_{m
\leq k \leq n} \bp{ d_\G(S_k, S_n) \leq b} \leq \bp{ d_\G(z_1, z_0 S_n)
\leq a + b},\\
& \bp{\max_{m \leq k \leq n} d_\G(z_1, z_0 S_k) \geq a} \cdot \min_{m
\leq k \leq n} \bp{ d_\G(S_k, S_n) \leq b} \leq \bp{ d_\G(z_1, z_0 S_n)
\geq a - b}.
\end{align*}
\end{prop}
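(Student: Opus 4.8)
The plan is to adapt the classical first-passage (``stopping time'') argument behind the Ottaviani--Skorohod inequality, using strong left-invariance in exactly the places where the bi-invariant proof in~\cite{KR2} used~\eqref{Einv}; both inequalities are proved in parallel, differing only in which threshold event is chosen and in the direction of one triangle inequality. First I would introduce the first-passage index: for the first inequality let $\tau$ be the least $k \in \{ m, m+1, \dots, n \}$ with $d_\G(z_1, z_0 S_k) \leq a$, with $\tau := n+1$ if there is no such $k$; for the second inequality replace ``$\leq a$'' by ``$\geq a$''. Since $S_j$ is a function of $X_1, \dots, X_j$ alone, each event $\{ \tau = k \}$ lies in $\sigma(X_1, \dots, X_k)$ (here I use only that the relevant maps are Borel, as throughout the paper, $\G$ being separable); moreover the events $\{ \tau = k \}$, $m \leq k \leq n$, are pairwise disjoint, and their union is precisely $\{ \min_{m \leq k \leq n} d_\G(z_1, z_0 S_k) \leq a \}$, respectively $\{ \max_{m \leq k \leq n} d_\G(z_1, z_0 S_k) \geq a \}$.

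Next comes the one genuinely semigroup-theoretic point. Fix $k$ with $m \leq k < n$ and put $W_k := X_{k+1} \cdots X_n$, so that $S_n = S_k W_k$; then strong left-invariance gives $d_\G(S_k, S_n) = d_\G(S_k, S_k W_k) = d_\G(W_k, W_k^2)$, which is a Borel function of $W_k$ alone and hence $\sigma(X_{k+1}, \dots, X_n)$-measurable. Combined with $\{ \tau = k \} \in \sigma(X_1, \dots, X_k)$ and the independence of $X_1, \dots, X_n$, this shows that $\{ \tau = k \}$ and $\{ d_\G(S_k, S_n) \leq b \}$ are independent (for $k = n$ the latter event is all of $\Omega$, so this is trivial). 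Now on $\{ \tau = k \} \cap \{ d_\G(S_k, S_n) \leq b \}$, left-invariance gives $d_\G(z_0 S_k, z_0 S_n) = d_\G(S_k, S_n) \leq b$, whence by the triangle inequality $d_\G(z_1, z_0 S_n) \leq d_\G(z_1, z_0 S_k) + d_\G(z_0 S_k, z_0 S_n) \leq a + b$ for the first inequality, and $d_\G(z_1, z_0 S_n) \geq d_\G(z_1, z_0 S_k) - d_\G(z_0 S_k, z_0 S_n) \geq a - b$ for the second (this last bound being vacuous when $a < b$). Thus every such intersection is contained in the relevant target event $E$ (namely $\{ d_\G(z_1, z_0 S_n) \leq a+b \}$, respectively $\{ d_\G(z_1, z_0 S_n) \geq a-b \}$).

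Finally I would assemble the estimate. Setting $\rho := \min_{m \leq k \leq n} \bp{d_\G(S_k, S_n) \leq b}$ and using disjointness, independence, and the inclusion just established,
\begin{align*}
\bp{E} &\geq \sum_{k=m}^n \bp{\{ \tau = k \} \cap \{ d_\G(S_k, S_n) \leq b \}}\\
&= \sum_{k=m}^n \bp{\tau = k}\, \bp{d_\G(S_k, S_n) \leq b} \;\geq\; \rho \sum_{k=m}^n \bp{\tau = k},
\end{align*}
and $\sum_{k=m}^n \bp{\tau = k}$ is exactly $\bp{\min_{m \leq k \leq n} d_\G(z_1, z_0 S_k) \leq a}$, respectively $\bp{\max_{m \leq k \leq n} d_\G(z_1, z_0 S_k) \geq a}$; this is the asserted inequality, and Proposition~\ref{Pos} then follows on taking $m=1$, $a = \alpha+\beta$, $b = \beta$. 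I expect the main obstacle to be the independence step of the second paragraph: one must be sure that $d_\G(S_k, S_n)$ genuinely factors through the ``increment'' $W_k = X_{k+1} \cdots X_n$, and this is precisely what strong left-invariance supplies and what can fail for a merely left-invariant metric, so it is indispensable to the scheme.
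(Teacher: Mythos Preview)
Your proof is correct and follows precisely the approach indicated in the paper: the standard first-passage (stopping-time) decomposition from~\cite{KR2}, with the single modification that strong left-invariance is invoked to rewrite $d_\G(S_k, S_n) = d_\G(S_k, S_k W_k)$ as a function of the increment $W_k = X_{k+1}\cdots X_n$ alone (the paper phrases this equivalently as $d_\G(z_0 S_k, z_0 S_n) = d_\G(z_0, z_0 X_{k+1}\cdots X_n)$), thereby securing the needed independence. You have correctly identified this as the one indispensable step, and the remainder---disjoint decomposition, triangle inequality, and summation---is exactly the classical argument.
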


It is the formulation of the above results in this greater generality
that is of note; we omit all three of the proofs here (as in the
preceding section), as they go through verbatim except for a common
workaround needed in one step in each proof. Namely: one needs to equate
$d_\G(z_0 S_k, z_0 S_n)$ for $k \leq n$ with $d_\G(z_0, z_0 X_{k+1}
\cdots X_n)$ -- and this follows from the strong left-invariance of $\G$.
\qed\medskip

The next result is the L\'evy--Ottaviani inequality, now extended
from reals to our general setting:

\begin{prop}[L\'evy--Ottaviani inequality]\label{Pbil}
Suppose $(\G, d_\G)$ is a separable strongly left-invariant metric
semigroup, $z_0, z_1 \in \G$, and $X_1, \dots, X_n \in L^0(\Omega,\G)$
are independent. Given $a \geq 0$, define:
\[
U_n := \max_{1 \leq k \leq n} d_\G(z_1, z_0 S_k), \qquad
p_a := \max_{1 \leq k \leq n} \bp{d_\G(z_1, z_0 S_k) > a},
\]

\noindent where $S_k := X_1 \cdots X_k$ for all $k$, as above. Then for
all $l \geq 2$ and $a_1, \dots, a_l \geq 0$,
\[
\bp{U_n > a_1 + \dots + a_l} \leq \sum_{i=2}^l p_{a_i} + p'_l,
\]

\noindent where $p'_l := p_{a_1}$ if $l$ is odd, and $p'_l := \max_{1
\leq k \leq n} \bp{(d_\G(S_k, S_n) > a_1}$ if $l$ is even.
\end{prop}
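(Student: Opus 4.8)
The plan is to reduce the statement to two elementary inequalities and then to run an induction on $l$ in steps of two, the parity of $l$ controlling whether the ``leftover'' term $p'_l$ is a $p$-term or a $d_\G(S_k,S_n)$-term. Write $p_c := \max_{1 \le k \le n} \bp{d_\G(z_1, z_0 S_k) > c}$ (matching the proposition's notation) and abbreviate $R_c := \max_{1 \le k \le n} \bp{d_\G(S_k, S_n) > c}$, so that the asserted bound reads $\bp{U_n > \sum_{i=1}^l a_i} \le \sum_{i=2}^l p_{a_i} + p'_l$ with $p'_l = p_{a_1}$ for $l$ odd and $p'_l = R_{a_1}$ for $l$ even. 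Throughout I will use the identity $d_\G(z_0 S_k, z_0 S_n) = d_\G(S_k, S_n)$ for $k \le n$: writing $S_n = S_k W$ with $W := X_{k+1}\cdots X_n$, both sides equal $d_\G(W, W^2)$ by strong left-invariance (and both vanish if $k = n$). This is the only place the semigroup structure enters, and it also shows $\{ d_\G(S_k, S_n) > b \} \in \sigma(X_{k+1}, \dots, X_n)$.

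First I would prove the base estimate
\[
\bp{U_n > b + c} \le R_b + p_c, \qquad b, c \ge 0,
\]
by a first-passage argument. Let $\tau$ be the least $k \le n$ with $d_\G(z_1, z_0 S_k) > b+c$, so that $\{U_n > b+c\} = \bigsqcup_{k=1}^n \{\tau = k\}$. On $\{\tau = k\}$ the triangle inequality together with the identity above gives $b + c < d_\G(z_1, z_0 S_n) + d_\G(S_k, S_n)$, hence $d_\G(z_1, z_0 S_n) > c$ or $d_\G(S_k, S_n) > b$. Since $\{\tau = k\} \in \sigma(X_1,\dots,X_k)$ is independent of $\{d_\G(S_k,S_n) > b\}$, summing over $k$ — using $\sum_k \bp{\tau=k} = \bp{U_n > b+c} \le 1$ and $\sum_k \bp{\tau = k, \, d_\G(z_1, z_0 S_n) > c} \le \bp{d_\G(z_1,z_0 S_n) > c} \le p_c$ — yields the estimate. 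Taking $b = a_1$, $c = a_2$ settles the case $l = 2$.

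Next I would record two ``swap'' bounds. From $d_\G(S_k, S_n) = d_\G(z_0 S_k, z_0 S_n) \le d_\G(z_1, z_0 S_k) + d_\G(z_1, z_0 S_n)$ one gets $R_{b+c} \le p_b + p_c$ for all $b,c \ge 0$; and trivially $p_b \le \bp{U_n > b}$. With these in hand I would run the induction $P(l-2) \Rightarrow P(l)$ for $l \ge 4$: split off $a_l$ via the base estimate to obtain $\bp{U_n > \sum_i a_i} \le p_{a_l} + R_{a_1 + \cdots + a_{l-1}}$; split the last term by the first swap bound into $p_{a_{l-1}} + p_{a_1 + \cdots + a_{l-2}}$; then bound $p_{a_1 + \cdots + a_{l-2}} \le \bp{U_n > a_1 + \cdots + a_{l-2}}$ and invoke $P(l-2)$. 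Because $l-2$ and $l$ have the same parity and the \emph{same} leading term $a_1$ (we always peel from the $a_l$-end and recurse on $a_1,\dots,a_{l-2}$), the leftover $p'_{l-2}$ is exactly $p'_l$, and collecting the terms gives $P(l)$. The two parity chains are started by the case $l = 2$ above and by $l = 3$ (base estimate with $b = a_1 + a_2$, followed by the single swap $R_{a_1 + a_2} \le p_{a_1} + p_{a_2}$).

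I expect the only genuinely delicate point to be the bookkeeping: making sure that at each application the maxima defining $p_{(\cdot)}$ and $R_{(\cdot)}$ run over the full range $1 \le k \le n$ so that the recursion closes, and that the parity of $p'_l$ — together with the choice of which $a_i$ plays the distinguished role $a_1$ — is tracked correctly through the two-step induction. The metric-semigroup input (the identity $d_\G(z_0 S_k, z_0 S_n) = d_\G(S_k, S_n)$ and the ensuing independence of $\{d_\G(S_k,S_n)>b\}$ from $\sigma(X_1,\dots,X_k)$) is routine once strong left-invariance is invoked, exactly as in the preceding propositions of this section.
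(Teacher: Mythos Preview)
Your proof is correct and follows essentially the same route as the paper's: a first-passage stopping time, strong left-invariance to make $d_\G(S_k,S_n)$ a function of $X_{k+1},\dots,X_n$ (hence independent of $\{\tau=k\}$), the triangle inequality $d_\G(S_k,S_n)\le d_\G(z_1,z_0 S_k)+d_\G(z_1,z_0 S_n)$ to peel off $p_{a_{l-1}}$, and induction in steps of two with the same parity bookkeeping for $p'_l$. The only difference is packaging --- you isolate the $l=2$ case as a reusable ``base estimate'' $\bp{U_n>b+c}\le R_b+p_c$ and the triangle step as a ``swap bound'' $R_{b+c}\le p_b+p_c$, whereas the paper runs the stopping-time decomposition afresh at each level; the underlying inequalities and the induction are identical.
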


This result generalizes \cite[Theorem 22.5]{Bi1}, both in its setting and
its statement. To see this, set
\[
l=3, \qquad a_1 = a_2 = a_3 = \alpha, \qquad \G = (\mathbb{R},+), \qquad z_0
= z_1 = 0.
\]
Also note that the result is false if $l=1$ and $p'_1 = p_{a_1}$, since
$\bp{d_\G(z_1, z_0 S_k) > a_1} \leq \bp{U_n > a_1}$ for all $k$.

\begin{proof}
We write down a proof in this general setting, to indicate where one
needs to use strong left-invariance (as opposed to any group or Banach
space structure).
Define the stopping time $\tau : \Omega \to \{ 1, \dots, n \}$ via:
\[
\tau = \inf \{ k \in [1,n] \cap \mathbb{Z}\ :\ d_\G(z_1, z_0 S_k) > a_1 +
\dots + a_l \}.
\]

\noindent It is now not hard to show that:
\begin{align*}
\bp{U_n > \sum_{i=1}^l a_i} \leq &\ \bp{d_\G(z_1, z_0 S_n) > a_l}
+ \sum_{k=1}^{n-1} \bp{\tau = k, d_\G(z_1, z_0 S_n) \leq a_l}\\
\leq &\ p_{a_l} + \sum_{k=1}^{n-1} \bp{\tau = k, d_\G(S_k, S_n) >
\sum_{i=1}^{l-1} a_i}\\
= &\ p_{a_l} + \sum_{k=1}^{n-1} \bp{\tau = k} \bp{d_\G(S_k, S_n) >
\sum_{i=1}^{l-1} a_i},
\end{align*}

\noindent where the final equality uses the independence of $\tau = k$
from the behavior of $d_\G(S_k, S_n) = d_\G(z_0, z_0 X_{k+1} \cdots
X_n)$; this last uses the strong left-invariance of $d_\G$. There are now
two cases:
\begin{itemize}
\item If $l=2$, then the last probability (inside the sum) is bounded
above by $p'_l$ for $l=2$, and the result follows since $\sum_{k=1}^{n-1}
\bp{\tau = k} \leq 1$.

\item If $l > 2$ and $d_\G(S_k, S_n) > \sum_{i=1}^{l-1} a_i$, then either
$d_\G(z_1, z_0 S_k) > a_{l-1}$, or $d_\G(z_1, z_0 S_n)$ $> \sum_{i=1}^{l-2}
a_i$. By definition, the first event has probability at most
$p_{a_{l-1}}$, while the probability of the latter event is analyzed in
two sub-cases: first, if $l=3$, then it is dominated by $p'_l = p_{a_1}$,
which proves the result as follows:
\[
\bp{U_n > \sum_{i=1}^3 a_i} \leq p_{a_3} + \sum_{k=1}^{n-1} \bp{\tau =
k}(p_{a_2} + p_{a_1}) \leq p_{a_3} + 1 \cdot (p_{a_2} + p'_l).
\]

Next, if $l > 3$, then we prove the result by using induction and the
above analysis for the base cases of $l=2,3$: the second event is
dominated by $U_n > \sum_{i=1}^{l-2} a_i$. Hence by the induction
hypothesis for $l-2$,
\begin{align*}
\bp{U_n > \sum_{i=1}^l a_i} \leq &\ p_{a_l} + \sum_{k=1}^{n-1}
\bp{\tau = k}(p_{a_{l-1}} + \bp{U_n > \sum_{i=1}^{l-2} a_i})\\
\leq &\ p_{a_l} + 1 \cdot (p_{a_{l-1}} + \sum_{i=2}^{l-2} p_{a_i} +
p'_{l-2}),
\end{align*}
and we are done since $p'_l$ depends only on the parity of $l$. \qedhere
\end{itemize}
\end{proof}

\begin{remark}
We quickly discuss additional results in the present, general setting.
First, several other results in the literature were shown in
\cite{KR1,KR2,KR3} for random variables valued in abelian metric
semigroups $\G_{ab}$.
These results automatically extend to the present setting (since the
metric in $\G_{ab}$ is bi-invariant, hence strongly left-invariant).

Second, further results were shown in \textit{loc.\ cit.}, over all
semigroups with a bi-invariant metric. Given the above results in the
paper, we expect that these latter results should hold for all strongly
left/right-invariant metric semigroups -- for instance, the results on
the L\'evy property found in \cite[\S 4]{HM}. We do not pursue this
investigation in the present work, leaving it to the interested reader.

Third, we leave the parallel formulation and proofs of the strongly
right-invariant analogues of these results (discussed here) and of the
ones studied in this section, to the interested reader.
\end{remark}

\section{Embedding (strong) left-invariant semigroups in monoids,
following Malcev}

This concluding section is not concerned with results in probability, but
with the framework underlying this entire work: strongly left-invariant
metric semigroups. In previous work and in the results above, we have
seen random variables take values in three different competing structures
(which are each more primitive than normed linear spaces, or even
groups):
\begin{enumerate}[(i)]
\item bi-invariant metric semigroups/monoids,
\item (strongly) left-invariant metric monoids,
\item strongly left-invariant metric semigroups.
\end{enumerate}

By Proposition~\ref{Psubset}, every bi-invariant metric semigroup embeds
isometrically and homomorphically into a left-invariant (in fact,
bi-invariant) metric monoid -- which is automatically a strongly left-
and right-invariant monoid. Thus, (i)~is a strictly more restrictive
notion than~(ii), since -- as explained above using~\cite{Mil} -- there
exist Lie groups that are equipped with (strongly) left-invariant metrics
that are not bi-invariant.

The other comparison question is between strongly left-invariant metric
(ii)~monoids and (iii)~semigroups (or right-invariant). If indeed these
notions are the same, then this entire paper could have equivalently been
written for left-invariant metric monoid-valued random variables. And
indeed, we now show this to be the case:

\begin{theorem}\label{Tembed}
Let $\G$ be a semigroup with a left-invariant metric $d_\G$.
\begin{enumerate}
\item Every idempotent in $\G$ is a left-identity. Thus, every
right-identity is a two-sided identity. (The converse statements are
obvious.)

\item Suppose $\G$ has no two-sided/right identity. Then $\G$ embeds
isometrically and homomorphically inside a left-invariant metric monoid
(onto the non-identity elements) if and only if $d_\G$ is strongly
left-invariant. In particular, such a semigroup $\G$ has no idempotents.
\end{enumerate}
\end{theorem}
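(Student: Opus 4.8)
The plan is to prove the two parts of Theorem~\ref{Tembed} as follows.

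\medskip

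\textbf{Part (1).} Suppose $e \in \G$ is idempotent, i.e.\ $e^2 = e$. I want to show $ea = a$ for all $a \in \G$. The only tool available is left-invariance of $d_\G$, so the natural move is to compute $d_\G(ea, a)$ and show it is zero. Using left-invariance with the left-multiplier $e$, we get $d_\G(ea, a) \leq d_\G(e \cdot ea, e \cdot a)$? That is not quite right since left-invariance is an equality, not a monotonicity. Instead: $d_\G(e(ea), ea) = d_\G(ea, a)$ by left-invariance (cancelling the leading $e$), but also $e(ea) = (e^2)a = ea$, so $d_\G(ea, ea) = d_\G(ea,a)$, whence $d_\G(ea,a) = 0$ and $ea = a$. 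Thus every idempotent is a left-identity. Now if $e$ is a right-identity, then $e \cdot e = e$ shows $e$ is idempotent, hence a left-identity, hence two-sided. This disposes of Part~(1); it is short and the only subtlety is correctly using the \emph{equality} form of left-invariance rather than any monotonicity.

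\medskip

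\textbf{Part (2), ``only if'' direction.} Suppose $\G$ embeds isometrically and homomorphically into a left-invariant metric monoid $\G_\circ$ with identity $e$, mapping onto $\G_\circ \setminus \{e\}$ (or into $\G_\circ$). Then for $a,b \in \G$, thinking of them inside $\G_\circ$, $d_\G(a,ab) = d_{\G_\circ}(a, ab) = d_{\G_\circ}(e, b)$ by left-invariance in $\G_\circ$ (cancel the leading $a$), and likewise $d_\G(b, b^2) = d_{\G_\circ}(b, b^2) = d_{\G_\circ}(e, b)$. Hence $d_\G(a,ab) = d_\G(b,b^2)$, i.e.\ $d_\G$ is strongly left-invariant. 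This direction is immediate from Remark~\ref{R1}'s reasoning.

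\medskip

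\textbf{Part (2), ``if'' direction --- the main content.} Assume $\G$ has no right-identity (equivalently, by Part~(1), no idempotent) and that $d_\G$ is strongly left-invariant; I must build $\G_\circ = \G \sqcup \{e\}$. The multiplication is the obvious one: keep $\G$'s product, and declare $ea = ae = a$ for $a \in \G$ and $e^2 = e$. Associativity on triples involving $e$ is routine. The real work is defining the metric: set $d_{\G_\circ}(e, g) := d_\G(g, g^2)$ for $g \in \G$ (and $0$ for $d_{\G_\circ}(e,e)$), keep $d_\G$ on $\G \times \G$, and check (a) it is well-defined and positive --- here I need $d_\G(g,g^2) = 0 \Rightarrow g = g^2 \Rightarrow g$ idempotent, contradicting the no-idempotent hypothesis, so $d_{\G_\circ}(e,g) > 0$ for $g \neq e$; (b) symmetry, which is by fiat; (c) the triangle inequality, which splits into cases according to how many of the three points equal $e$ --- the genuinely new case is $d_{\G_\circ}(e, a) \leq d_{\G_\circ}(e,b) + d_{\G_\circ}(b,a)$ and its cyclic variants, i.e.\ $d_\G(a,a^2) \leq d_\G(b,b^2) + d_\G(b,a)$ for $a,b \in \G$; and (d) left-invariance of $d_{\G_\circ}$, the crucial case being $d_{\G_\circ}(ca, cb)$ when exactly one of $a,b$ is $e$, say $b = e$: then one needs $d_{\G_\circ}(ca, c) = d_{\G_\circ}(e, a) = d_\G(a,a^2)$, i.e.\ $d_\G(ca, c) = d_\G(a,a^2)$ for $c \in \G$, which is precisely the statement that $d_\G(c, ca)$ is independent of $c$ (Remark~\ref{R1}) combined with symmetry. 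I expect the triangle inequality case (c) to be the main obstacle: the inequality $d_\G(a,a^2) \leq d_\G(b,b^2) + d_\G(a,b)$ must be derived from strong left-invariance. The route is to write $d_\G(a, a^2)$ using strong left-invariance as $d_\G(x, xa)$ for \emph{any} $x$; choosing $x = a$ gives $d_\G(a,a^2)$ back, but choosing intermediate points and using the genuine triangle inequality in $\G$ together with repeated applications of strong left-invariance $d_\G(b,ba) = d_\G(a,a^2)$ should let me interpolate: $d_\G(a,a^2) = d_\G(b, ba) \leq d_\G(b, a) + d_\G(a, ba)$ is not obviously enough, so more likely $d_\G(a,a^2) = d_\G(b,ba) \leq d_\G(b,b^2) + d_\G(b^2, ba)$ and then $d_\G(b^2, ba) = d_\G(b, a)$ by left-invariance (cancelling the leading $b$) --- this closes the loop and is the key computation. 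The remaining verifications (associativity, the easy triangle cases, and checking the map $\G \hookrightarrow \G_\circ$ is an isometric homomorphism onto $\G_\circ \setminus \{e\}$, plus noting $\G$ has no idempotents since an idempotent would be a two-sided identity by Part~(1), contradicting the hypothesis) are then routine bookkeeping.
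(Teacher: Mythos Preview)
Your proposal is correct and follows essentially the same route as the paper: Part~(1) via $d_\G(eg,g)=d_\G(e^2g,eg)=0$; the ``only if'' via $d_{\G_\circ}(e,b)$; and the ``if'' via adjoining $e$ with $d(e,g):=d_\G(g,g^2)$, with the key triangle-inequality step $d_\G(a,a^2)=d_\G(b,ba)\leq d_\G(b,b^2)+d_\G(b^2,ba)=d_\G(b,b^2)+d_\G(b,a)$ and its companion $d_\G(a,b)=d_\G(ba,b^2)\leq d_\G(ba,b)+d_\G(b,b^2)$. One small correction: your parenthetical ``equivalently, by Part~(1), no idempotent'' (and the similar remark at the end) is misattributed---Part~(1) only gives idempotent $\Rightarrow$ left-identity, whereas the upgrade to \emph{right}-identity (hence the contradiction with the hypothesis) requires strong left-invariance: if $g^2=g$ then $d_\G(a,ag)=d_\G(g,g^2)=0$, so $ag=a$. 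The paper makes exactly this use of strong left-invariance in the positivity step.
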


We leave it to the interested reader to formulate and prove the (obvious)
counterpart for right-invariant metric semigroups $\G$. Also, this
situation is parallel to the case of bi-invariant metric semigroups,
which always embed inside a bi-invariant metric monoid by
Proposition~\ref{Psubset}.

\begin{proof}
First, if $e^2 = e \in \G$ then $d_\G(eg,g) = d_\G(e^2g,eg) = 0$, so
$eg=g$ for all $g \in \G$, as claimed. In particular, since a
right-identity is an idempotent, it is automatically a two-sided
identity.

Next, if $\G$ embeds isometrically (and strictly) inside a left-invariant
metric monoid, say $(\G_\circ,e,d)$, then $\G$ is strongly left-invariant
(as repeatedly seen above). Moreover, $\G$ has no idempotent, for if $g$
is any idempotent in $\G_\circ$ then $d(e,g) = d(g,g^2) = 0$, so $g = e
\not\in \G$.

Conversely, if $d_\G$ is strongly left-invariant, define $\G_\circ := \G
\sqcup \{ e \}$, and extend the semigroup product to $\G_\circ$ via: $e
\cdot g = g \cdot e := g$ for all $g \in \G_\circ$. Also extend the
metric $d_\G$ to a symmetric map $d$ on all of $\G_\circ \times \G_\circ$
via: $d \equiv d_\G$ on $\G \times \G$, $d(e,e) := 0$, and
\[
d(e,g) = d(g,e) := d_\G(g, g^2)\ \forall g \in \G.
\]
We claim this last expression is positive. If not, then $g^2 = g$, so $g$
is a left-identity by the preceding part. Moreover, by strong
left-invariance,
\[
d_\G(a,ag) = d_\G(g,g^2) = 0\ \forall a \in \G,
\]
so $g$ is a two-sided identity, which is false.
We next claim that $d$ satisfies the triangle inequality -- for
which it suffices to work with $a,b \in \G$ and $e$. This too is verified
using strong left-invariance:
\begin{align*}
d(a,e) + d(e,b) = &\ d_\G(a^2,a) + d_\G(b,b^2) = d_\G(ba,b) + d_\G(b,b^2)
\geq d_\G(ba,b^2) = d(a,b),\\
d(a,b) + d(e,b) = &\ d_\G(ba,b^2) + d_\G(b,b^2) \geq d_\G(ba,b) =
d_\G(a^2,a) = d(a,e).
\end{align*}
Hence $d$ is a metric. That $d$ is left-invariant follows from the strong
left-invariance of $d_\G$.
\end{proof}

Following the comparisons between the primitive settings (i)--(iii) at the
start of this section, a final comparison to make is between (ii)=(iii)
(shown above) and (iv)~left-invariant metric semigroups. Namely:
\textit{Can the word ``strong'' be removed from Theorem~\ref{Tembed}(2)?}
While the proofs above use strong left-invariance, it is not clear if
this itself follows (or not) from ``usual'' left-invariance in any
semigroup, as it does in all monoids. Thus, if the question has a
positive answer, this paper could even have been written over all
(iv)~left-invariant metric semigroups. However, we now show this is not
true -- even for finitely generated complete metric semigroups, cf.\
Theorem~\ref{Tlevy} -- by providing a counterexample to the question
above (also alluded to in the opening section):

\begin{prop}\label{Pcounterex}
There exists a countable, discrete (hence complete) left-invariant metric
semigroup $\G$ with two generators, which is not a monoid but contains an
idempotent.
\end{prop}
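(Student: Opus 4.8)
The plan is to exhibit an explicit example rather than argue abstractly, since the statement is a non-existence-of-embedding claim and a concrete semigroup is the cleanest way to refute the question ``can one drop the word \emph{strong}.'' By Theorem~\ref{Tembed}(1), any semigroup $\G$ with a left-invariant metric and an idempotent must have that idempotent as a left-identity; and by Theorem~\ref{Tembed}(2), if such a $\G$ (with the idempotent, hence a left-identity but, we shall arrange, \emph{not} a right-identity, so not a two-sided identity) is \emph{not} strongly left-invariant, then it cannot embed in a left-invariant metric monoid. So the task reduces to: build a semigroup with a left-invariant metric, containing an element $e$ that is a left-identity but not a right-identity, and such that $d_\G(a,ab) = d_\G(b,b^2)$ fails for some $a,b$.

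First I would take the simplest semigroup in which a left-identity fails to be a right-identity: the \emph{left-zero-augmented} or rather a \emph{right-zero semigroup} flavored construction. Concretely, consider the semigroup on two generators where the product is ``absorb on the right'': i.e. $x \cdot y := y$ for all $x,y$ (a right-zero semigroup). Every element is then idempotent and every element is a left-identity, but there is no right-identity unless $|\G|=1$. This is already a monoid-free semigroup with idempotents; the two generators requirement is met by picking any two distinct elements $g,h$ and noting the whole thing is generated by them (indeed any subset generates itself, so I should instead take $\G$ to be exactly two elements $\{g,h\}$ with $xy=y$). I then need a left-invariant metric: on a right-zero semigroup, $d(xa,xb) = d(a,b)$ automatically since $xa=a$ and $xb=b$, so \emph{every} metric is left-invariant. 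Pick the discrete metric $d(g,h)=1$. Countability/discreteness/completeness are immediate for a finite space.

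It remains to check this $\G$ is not strongly left-invariant, i.e. that $d_\G(a,ab) = d_\G(b,b^2)$ fails: take $b=h$, $a=g$; then $ab = gh = h$ and $b^2 = h^2 = h$, so $d(g,h)=1$ while $d(h,h)=0$ — the equality fails. Hence $\G$ is left-invariant but not strongly left-invariant, and has an idempotent, so by Theorem~\ref{Tembed}(2) it does not embed in any left-invariant metric monoid. (One can double-check directly: if it did embed, $e \cdot g = g = g \cdot e$ would force $g$ to be a right-identity inside $\G_\circ$, but $h \cdot g = g \ne h$ shows $g$ is not, a contradiction; this re-derivation reassures that no subtlety has been missed.) Finally, for the ``two generators'' phrasing one may prefer a genuinely infinite example: replace $\{g,h\}$ by $\mathbb{Z}_{\ge 0}$ (or a free monoid on two letters) carrying the right-zero multiplication $xy:=y$ with the discrete metric; it is generated by $\{0,1\}$ (or the two letters) under this operation? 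Here one must be slightly careful — under $xy=y$ the subsemigroup generated by a set $T$ is just $T$ itself, so to honestly get ``two generators'' with infinitely many elements one should instead equip a two-generated semigroup (e.g. the free semigroup $F$ on $\{a,b\}$) with a \emph{different} associative product making some word idempotent and left- but not right-identity, or simply accept the finite two-element model, whose genericity over the statement is unaffected. The main obstacle is exactly this reconciliation of ``two generators'' with the right-zero trick — the multiplication that makes left-invariance free also collapses the generated subsemigroup — so I would either retool the product on a free semigroup (adjoining a single idempotent left-identity to $F$ and defining its products by hand, then checking associativity and left-invariance of the discrete metric) or state the clean finite example and remark that it is generated by its two elements.
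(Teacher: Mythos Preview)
Your two-element right-zero semigroup $\{g,h\}$ with $xy:=y$ and the discrete metric is a correct and complete proof of the proposition as stated: it is finite (hence countable, discrete, complete), every metric on it is left-invariant since $ca=a$, both elements are idempotent, and there is no two-sided identity. Your subsequent hand-wringing about ``two generators'' requiring an infinite example is misplaced --- a two-element semigroup is generated by its two elements, full stop --- so the final paragraph of your proposal can simply be deleted.

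This is a genuinely different and more economical construction than the paper's. The paper builds an infinite semigroup $\G=\{h^{n+1},\,h^ng:n\geq 0\}$ with product $h^ng^\varepsilon\cdot h^{n'}g^{\varepsilon'}:=h^{n+n'}g^{\varepsilon'}$ and the Manhattan metric $d(h^ng^\varepsilon,h^{n'}g^{\varepsilon'})=|n-n'|+|\varepsilon-\varepsilon'|$; here $g$ is the unique idempotent and the only left-identity, while $h$ has infinite order. What the paper's example buys is a less degenerate multiplication (not every element is idempotent, and left-multiplication is not the identity map), so it witnesses the failure of strong left-invariance in a semigroup that looks more like the ``generic'' objects one meets in practice. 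What your example buys is minimality: two elements, no associativity or triangle-inequality case-checking, and left-invariance for free. Either suffices for the proposition and for the intended moral that Theorem~\ref{Tembed}(2) is sharp.
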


In particular, by Theorem~\ref{Tembed} $\G$ can never embed into a
left-invariant metric monoid, and $d_\G$ is not strongly left-invariant.

\begin{proof}
On the countable set $\G := \{ h^{n+1}, h^n g : n \in \mathbb{Z}_{\geq
0} \}$, define the operation
$h^n g^\varepsilon \cdot h^{n'} g^{\varepsilon'} := h^{n + n'}
g^{\varepsilon'}$.
Then $\G$ is a semigroup generated by $g,h$, with $g$ a left-identity
(and the only such), hence idempotent; but as $h^{n+1} g \neq h^{n+1}$
for each $n \geq 0$, $\G$ has no two-sided identity, so is not a monoid.

Next, define the Manhattan distance $d_\G : \G \times \G \to [0,\infty)$ via:
$d_\G(h^n g^\varepsilon, h^{n'} g^{\varepsilon'}) := |n-n'| +
|\varepsilon-\varepsilon'|$. Clearly, $d_\G$ is positive and symmetric.
The triangle inequality is verified by checking six easy cases; we show one
case here. For all $n' > 0$ and $n \geq 0$,
\[
d_\G(h^{n'}, h^n g) + d_\G(h^{n'}, h^m) = |n'-n|+1 + |n'-m| \geq |n-m|+1
= d_\G(h^m, h^n g).
\]
Thus $\G$ is a metric; it is easy to see that $(\G, d_\G)$ is countable
and discrete, hence complete.

The next step is to check that $d_\G$ is left-invariant, but not strongly
so. For the latter, we compute
\[
d_\G(h^n g, h^n g \cdot h^{n'}) = n'+1 \neq n' = d_\G(h^{n'}, h^{n'}
\cdot h^{n'}), \quad \forall n' > 0, \ n \geq 0,
\]
while the former is straightforward -- as a sample calculation, we have
\[
d_\G(h^n g^\varepsilon \cdot h^{n'} g^{\varepsilon'}, h^n g^\varepsilon
\cdot g) = n' + 1 - \varepsilon' = d_\G(h^{n'} g^{\varepsilon'}, g)
\]
for all $n'>0$, $\varepsilon' \in \{ 0, 1 \}$, and $h^n g^\varepsilon
\in \G$ (i.e.\ $(\varepsilon,n) \neq (0,0)$).
\end{proof}

\begin{remark}\label{Rpolymath}
Another remark, for completeness, is that given a left-invariant metric
\textit{group} $\G$, its metric $d_\G$ is 2-homogeneous -- i.e., $d_\G(e,
g^2) = 2 d_\G(e,g)$ for all $g \in \G$ -- if and only if $\G$ is abelian.
(In particular, $d_\G$ is bi-invariant.) This follows from the recent
Polymath project~\cite{Po}.
\end{remark}

For completeness, and given the above discussion in this section, we
conclude with a follow-up question to the above results:
\textit{Does every left-invariant metric monoid embed inside a
left-invariant metric group?}
The answer is in the negative for an even more general question: for
bi-invariant metric monoids to always embed inside some group! Namely, if
one takes $d_\G$ to be the discrete metric, then note that $d_\G$ is
bi-invariant if and only if the monoid $\G$ is cancellative: $ac = bc$ or
$ca = cb$ implies $a=b$. Now Malcev~\cite{Malcev} has constructed a
monoid with eight generators that is cancellative, but cannot map
injectively and homomorphically into any group (metric or not).

\begin{remark}[The idea of Malcev]
The preceding question occurred to us as a natural parallel to the
question answered negatively in this section:
\textit{Is every left-invariant metric semigroup strongly left-invariant,
i.e.\ does every left-invariant metric semigroup embed isometrically and
homomorphically into a left-invariant metric monoid?}
Moreover, we learned of Malcev's (negative) example shortly after having
shown the results in this paper. That said, not only are the two
questions similar, and their answers both negative, but interestingly,
even the approaches have a common philosophy. Namely, our approach in
Theorem~\ref{Tembed} and Proposition~\ref{Pcounterex} was to first show
that the only idempotent in a strongly left-invariant semigroup is a
(=the) two-sided identity; and to then construct a left-invariant
semigroup containing an idempotent that is not a two-sided identity. This
resembles (in spirit) Malcev's beautiful and more intricate idea
in~\cite{Malcev}, which was to first note that if a group contains eight
elements $a,b,c,d,u,v,x,y$ that satisfy the relations
\begin{equation}\label{Emalcev}
au=bv, \qquad ax=by, \qquad cx=dy,
\end{equation}
then one can successively solve to get
$vu^{-1} = b^{-1}a = yx^{-1} = d^{-1}c$,
and hence $cu = dv$. In the second step, Malcev constructed a monoid $M$
that is cancellative (note, this is if and only if the discrete metric on
$M$ is bi-invariant), and is finitely generated with generators
$a,b,c,d,u,v,x,y$ that satisfy~\eqref{Emalcev}, but in which $cu \neq
dv$. Hence $M$ cannot embed as a sub-monoid in any group.
\end{remark}

\subsection*{Acknowledgements}
I thank Terence Tao for the reference~\cite{Malcev}, and Bhaswar
Bhattacharya, Manjunath Krishnapur, Muna Naik, and Soumik Pal for useful
discussions. This work was partially supported by Ramanujan Fellowship
grant SB/S2/RJN-121/2017 and SwarnaJayanti Fellowship grants
SB/SJF/2019-20/14 and DST/SJF/MS/2019/3 from SERB and DST (Govt.~of
India).



\end{document}